\newcommand{\brax}[1]{\left( #1 \right)}
\newcommand{\abs}[1]{\left|#1\right|}
\newcommand{\sqrax}[1]{\left[ #1 \right]}
\newcommand{\set}[1]{\left\{#1\right\}}
\newcommand{\ve}{\varepsilon}
\DeclareMathOperator{\circc}{Circ}
\DeclareMathOperator{\sym}{Sym}
\DeclareMathOperator{\refl}{refl}
\DeclareMathOperator{\fold}{fold}
\DeclareMathOperator{\vol}{Vol}
\newcommand{\N}{\mathbb{N}}
\newcommand{\R}{\mathbb{R}}
\newcommand{\Mod}[1]{\ (\text{mod}\ #1)}
\renewcommand{\Mod}[1]{{\ifmmode\text{\rm\ (mod~$#1$)}\else\discretionary{}{}{\hbox{ }}\rm(mod~$#1$)\fi}}
\newtheorem{theorem}{Theorem}[section]
\newtheorem{prop}[theorem]{Proposition}
\newtheorem{lemma}[theorem]{Lemma}
\theoremstyle{definition} 
\numberwithin{theorem}{section}
\begin{document}

\title{On Axial Symmetry in Convex Bodies}

\author{Ritesh Goenka}
\address{Department of Mathematics \\ University of British Columbia \\ Room 121, 1984 Mathematics Road \\ Vancouver, BC, Canada V6T 1Z2}
\email{rgoenka@math.ubc.ca}
\author{Kenneth Moore}
\address{Department of Mathematics \\ University of British Columbia \\ Room 121, 1984 Mathematics Road \\ Vancouver, BC, Canada V6T 1Z2}
\email{kjmoore@math.ubc.ca}
\author{Wen Rui Sun}
\address{Department of Mathematical and Statistical Sciences \\ University of Alberta \\ Edmonton AB, Canada T6G 2G1}
\email{wrsun@ualberta.ca}
\author{Ethan Patrick White}
\address{Department of Mathematics \\ University of British Columbia \\ Room 121, 1984 Mathematics Road \\ Vancouver, BC, Canada V6T 1Z2}
\email{epwhite@math.ubc.ca}
\thanks{The fourth-named author is supported in part by an NSERC PDF}

\begin{abstract}
    For a two-dimensional convex body, the Kovner-Besicovitch measure of symmetry is defined as the volume ratio of the largest centrally symmetric body contained inside the body to the original body. A classical result states that the Kovner-Besicovitch measure is at least $2/3$ for every convex body and equals $2/3$ for triangles. Lassak showed that an alternative measure of symmetry, i.e., symmetry about a line (axiality) has a value of at least $2/3$ for every convex body. However, the smallest known value of the axiality of a convex body is around $0.81584$, achieved by a convex quadrilateral. We show that every plane convex body has axiality at least $\frac{2}{41}(10 + 3 \sqrt{2}) \approx 0.69476$, thereby establishing a separation with the central symmetry measure. Moreover, we find a family of convex quadrilaterals with axiality approaching $\frac{1}{3}(\sqrt{2}+1) \approx 0.80474$. We also establish improved bounds for a ``folding" measure of axial symmetry for plane convex bodies. Finally, we establish improved bounds for a generalization of axiality to high-dimensional convex bodies.
\end{abstract}

\maketitle

\section{Introduction}
\label{sec:intro}

Symmetry is central to the study of mathematics. Common examples include the invariance of lattices and tilings under translation, rotation, and reflection. In the context of geometric objects, there are several different notions of symmetry. For example, a three-dimensional body is said to exhibit mirror symmetry if there is a plane, reflecting the body through which yields itself. Symmetry measures are often used to quantify symmetry in convex bodies (compact convex sets). We refer the reader to \cite{Gru} and \cite{GT} (see also \cite{deV}) for detailed accounts of various quantitative symmetry measures arising in convex geometry.

For $n \in \N$ and $0 \le k < n$ integer, Chakerian and Stein \cite{CS} define the \emph{$k$-symmetry} of a convex body $K \subset \R^n$ to be
\begin{equation*}
    \sym_k(K)=\frac{1}{\vol_n(K)}\sqrax{\max_{k \text{ dim. flats } \mathcal{L}} \vol_n(K \cap \refl_\mathcal{L} (K))}\, ,
\end{equation*}
where $\vol_n(\cdot)$ is the $n$-dimensional Lebesgue measure and $\refl_\mathcal{L}(K)$ is the convex body obtained by reflecting $K$ through $\mathcal{L}$. Let
\begin{equation*}
    \sigma(n,k) = \inf_{K \subset \R^n\text{ convex body}} \sym_k(K)\, ,
\end{equation*}
be the size of the largest $k$-symmetric body inside of a convex body in $\R^n$, minimized over all unit volume convex bodies. Chakerian and Stein \cite[Theorem~3]{CS} also prove the general lower bound
\begin{equation}
\label{eqn:glb}
    \sigma(n, k) \ge \frac{\text{max}\{k!, (n-k)!\}}{2^{n-k} n!}\, .
\end{equation}

The case $k = 0$ corresponds to reflection through points. Thus $\sigma(n,0)$ is a measure of the lowest amount of central symmetry that an $n$-dimensional convex body has. The bound $\sigma(n,0) \ge 2^{-n}$ given by \eqref{eqn:glb} was proved previously by Stein \cite{Ste}. Improved bounds have been established in low dimensions, including $\sigma(2,0) = 2/3$ \cite{Bes,Far,LL} and $\sigma(3,0) \ge 2/9$ \cite{BR}. The former of these is known as the Kovner–Besicovitch theorem and is notable in that the bound is realized by triangles. F\'ary and R\'edei \cite{FR} computed the central symmetry of the regular $n$-simplex $\Delta^n$, thereby establishing the upper bound
\begin{equation}
\label{eqn:centup}
    \sigma(n,0) \le \sym_0(\Delta^n) = \frac{1}{(n+1)^n} \sum_{i=0}^{\lfloor n/2 \rfloor} (-1)^i {n+1 \choose i} (n+1-2i)^n\, .
\end{equation}

Similarly, the case $k = n-1$ corresponds to reflection through hyperplanes. Thus $\sigma(n,n-1)$ is a measure of the lowest amount of hyperplane mirror symmetry that an $n$-dimensional convex body has. A related measure of asymmetry called \emph{chirality coefficient} has been introduced in attempts to quantify chirality in chemistry \cite{Gil}. There are two differences: (1) it is defined as $1$ minus the symmetry measure since it is a measure of asymmetry, and (2) it also allows for rotation and translation of the reflected body to maximize the overlap volume. Gilat and Gordon \cite{GG} showed that in two dimensions, the addition of rotation and translation does not improve overlap. The same is true in three dimensions if the body of maximum overlap is unique. Gilat and Gordon also obtained upper bounds on the chirality coefficient using Ball's volume ratio inequality \cite{Ball} for John ellipsoids inscribed in convex bodies. In particular, the volume ratio inequality yields
\begin{equation*}
    \sigma(n,k) \ge \frac{\pi^{n/2} n!}{n^{n/2} (n+1)^{(n+1)/2} \Gamma(\frac{n}{2} + 1)}\, ,
\end{equation*}
since ellipsoids are $k$-symmetric for all $0 \le k < n$. The above estimate is weaker than \eqref{eqn:glb} for all sufficiently large values of $n$.

For plane convex bodies, the measure $\sym_1(\cdot)$ is often called axiality. Krakowski \cite{Kra} established a lower bound of $5/8$ on axiality, and Lassak \cite{Las} improved it to $2/3$. A better lower bound of $2 (\sqrt{2} - 1)$ was proved by Nohl \cite{Noh} for the axiality of centrally symmetric bodies. Buda and Mislow \cite{BM} showed that the same lower bound also holds for triangles. Both these results are tight in the sense that there is a centrally symmetric parallelogram and a sequence of triangles with axiality equal to/approaching $2(\sqrt{2} - 1)$. This stood as the best upper bound on $\sigma(2,1)$ until Choi improved the upper bound to approximately $0.81584$ and conjectured that the true value of $\sigma(2,1)$ is around $0.81$~\cite{Cho}. 

Building on the ideas of Lassak \cite{Las}, we obtain improved lower and upper bounds on $\sigma(2,1)$.

\begin{theorem}
\label{thm:axiality}
    Any planar convex body has axiality at least $\frac{2}{41} (10+3\sqrt{2})$. There is a sequence of quadrilaterals for which the axiality values approach $\frac{1}{3}(\sqrt{2}+1)$. Therefore, $0.69476 < \frac{2}{41} (10+3\sqrt{2}) \le \sigma(2,1) \le \frac{1}{3}(\sqrt{2}+1) < 0.80474$.
\end{theorem}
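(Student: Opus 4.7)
For the upper bound, my plan is to exhibit an explicit one-parameter family of quadrilaterals $\{Q_t\}_{t>0}$ whose axiality can be computed exactly and tends to $\frac{1}{3}(\sqrt{2}+1)$ in the limit. The appearance of $\sqrt{2}$ in the target value strongly suggests a family built around a $\pi/4$ angle, for instance an asymmetric trapezoid or kite with one angle close to $\pi/4$ and a degeneration parameter $t \to 0$. For each $Q_t$, the key step is to argue that the optimal reflection axis must lie in a finite list of natural candidates — typically lines joining midpoints of opposite sides, or angle bisectors at vertices — by using a convexity/monotonicity argument on the overlap area as the axis is rotated or translated. Once the candidate list is pinned down, the overlap along each is a polynomial expression in $t$, and one reads off the limiting value $\frac{1}{3}(\sqrt{2}+1)$.

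For the lower bound, I would build on Lassak's strategy for his $2/3$ bound. The idea is to associate to each planar convex body $K$ a distinguished chord $c$ with extremal properties (e.g.\ a diameter, an area-bisecting chord, or a chord minimizing some width-type quantity) and then reflect $K$ across a line naturally associated with $c$, such as the perpendicular bisector of $c$ or the line containing $c$ itself. To push past $2/3$, one needs \emph{several} candidate reflection axes and must show that whichever one wins produces overlap at least $\frac{2}{41}(10+3\sqrt{2}) \cdot \vol_2(K)$. A natural scheme is to consider two complementary axes — for instance, reflection across the perpendicular bisector of a diameter, and reflection across an area-bisecting chord perpendicular to that diameter — and then take the maximum. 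Each candidate's overlap can be lower-bounded using convexity (the reflected chord endpoints are still in $K$, pulling an entire triangle or trapezoid into the overlap), producing a system of inequalities linking the overlap to a few geometric parameters of $K$ (lengths of chords, positions of their intersections with $\partial K$).

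The main obstacle, and the heart of the proof, will be the lower bound optimization. After writing down the candidate-axis inequalities and taking their maximum, one is left with a minimization problem: find the convex $K$ (equivalently, the worst-case values of the geometric parameters) that minimizes the best overlap. I expect the extremizer to be a quadrilateral whose shape is determined by equating two of the candidate overlaps simultaneously — this is the source of the unusual algebraic constant $\frac{2}{41}(10+3\sqrt{2})$, which should arise as the unique positive solution of a quadratic obtained from these equalities. The technical difficulty is twofold: first, reducing from general convex $K$ to a polygon with few vertices (via a standard ``straightening'' argument that replaces an arc of $\partial K$ by a chord without increasing the overlap), and second, carrying out the finite-dimensional calculus to identify the exact extremizer and verify the resulting constant.
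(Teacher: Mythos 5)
Your upper-bound plan matches the paper's in spirit: the extremal family there is the flattening quadrilateral $\{(0,0),(1,0),(1,\tfrac{\sqrt 2}{1+\sqrt 2}\varepsilon),(\tfrac{1}{\sqrt 2},\varepsilon)\}$ with $\varepsilon\to 0$, and the two winning axes turn out to be the bisector of the angle at the origin and the vertical line $x=2/3$. The one caveat is that ``the optimal axis lies in a finite list of natural candidates'' is not obtained from a single convexity or monotonicity argument; the paper needs an exhaustive case analysis over all angles and all translates of the reflecting line (an entire appendix), so you should budget for that rather than assume unimodality of the overlap.

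The lower bound is where your proposal has a genuine gap. The constant $\tfrac{2}{41}(10+3\sqrt 2)=4/(10-3\sqrt 2)$ does not arise from balancing two chord-based reflections; it comes from combining two inputs that your plan never introduces. First, Lassak's inscribed \emph{axially regular hexagon} of area at least $2/3$ of $K$, whose edges are extended to a six-pointed star; the six cap areas $a,\dots,f$ of $K$ in the star points become the variables of a linear program, and the hexagon can be enlarged, keeping its axis, by gluing on symmetric isosceles triangles contributing area $\min\{a,f\}+\min\{b,e\}+\min\{c,d\}$. Second --- and this is the source of the $\sqrt 2$ --- the known tight bound $\mathrm{Sym}_1(T)\ge 2(\sqrt2-1)$ for triangles (Buda--Mislow) is applied to the two large circumscribed star triangles $A'C'E'$ and $B'D'F'$ (each of area $3/2$ relative to the unit-area hexagon), and transferred down to $K$ by a small lemma asserting $|K\cap\mathrm{refl}_\ell(K)|\ge |L\cap\mathrm{refl}_\ell(L)|-2|L\setminus K|$ for convex $K\subseteq L$. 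The optimum $4/(10-3\sqrt2)$ is then extracted by LP duality, the worst case occurring at total cap area $t=(6-3\sqrt2)/4$, where the hexagon-plus-triangles bound and the circumscribed-triangle bound coincide. Your scheme of a diameter, an area-bisecting chord, and a straightening reduction to quadrilaterals contains none of these mechanisms, and there is no indication it would beat Lassak's $2/3$; in particular, nothing in it explains why $3\sqrt 2$ (three times the triangle constant $\sqrt2-1$ scaled by the star triangle's area) should appear in the final answer, whereas in the paper that term is inherited directly from the triangle axiality theorem rather than from solving a quadratic.
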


Theorem~\ref{thm:axiality} shows that there is a strict separation between $\sigma(2,1)$ and $\sigma(2,0)$. In particular, $\sigma(2,1) > \sigma(2,0)$. Setting $k = n-1$ in \eqref{eqn:glb} yields the lower bound
\begin{equation}
\label{eqn:axlb}
    \sigma(n,n-1) \ge \frac{1}{2n}\, ,
\end{equation}
which together with \eqref{eqn:centup} implies that $\sigma(n,n-1) > \sigma(n,0)$ for all $n \ge 11$. It remains to determine whether a similar separation also holds for intermediate dimensions $3 \le n \le 10$.

In high dimensions, no non-trivial upper bounds have been found for $\sigma(n,n-1)$. We prove the following general theorem, which implies that the upper bound for $\sigma(2,1)$ carries over to $\sigma(n,n-1)$ with higher values of $n$.

\begin{theorem}
\label{thm:higherdimensions}
Let $n \ge 2$ and $0 \le k < n$ be integers. We have 
\begin{equation*}
    \max\set{\brax{2-2^{-1/n}}^{-n}, \sigma(n,k)} \geq \sigma(n+1,k+1)\, .
\end{equation*} 
In particular, $\sigma(n,n-1) \le \sigma(2,1) \le \frac{1}{3}(\sqrt{2}+1)$.
\end{theorem}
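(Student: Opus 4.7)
The plan is to prove the first inequality constructively: given $\epsilon>0$, I would produce a convex body $K\subset\R^{n+1}$ whose $(k+1)$-symmetry is bounded by $\max\{(2-2^{-1/n})^{-n},\,\sigma(n,k)+\epsilon\}$, and then pass to the limit. Fix $K_0\subset\R^n$ with $\sym_k(K_0)<\sigma(n,k)+\epsilon$, choose an optimal $k$-flat $\mathcal{L}_0^{\star}$ for $K_0$, select $x_0\in\mathcal{L}_0^{\star}$ and $h>0$, and set $K=\mathrm{conv}\big((K_0\times\{0\})\cup\{(x_0,h)\}\big)\subset\R^{n+1}$, the pyramid with base $K_0$ and apex $p=(x_0,h)$. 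Its horizontal slice at height $s\in[0,h]$ is the scaled translate $(s/h)x_0+(1-s/h)K_0$, and $\vol_{n+1}(K)=\vol_n(K_0)h/(n+1)$.

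I would bound $\sym_{k+1}(K)$ by splitting the candidate mirror $(k+1)$-flats $\mathcal{L}\subset\R^{n+1}$ into two families. The first family is vertical flats through $p$, i.e.\ $\mathcal{L}=\mathcal{L}_0\times\R$ for a $k$-flat $\mathcal{L}_0\ni x_0$. Such a reflection preserves heights and acts on each slice as $\refl_{\mathcal{L}_0}$, so by the pyramidal scaling,
\[
\frac{\vol_{n+1}(K\cap\refl_{\mathcal{L}}(K))}{\vol_{n+1}(K)}=\frac{\vol_n(K_0\cap\refl_{\mathcal{L}_0}(K_0))}{\vol_n(K_0)}\le\sym_k(K_0).
\]
The second family consists of all remaining mirrors — either not through $p$, or through $p$ but tilted off vertical. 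For any such $\mathcal{L}$, $\refl_{\mathcal{L}}(K)$ is a cone with apex $p'=\refl_{\mathcal{L}}(p)$ (possibly equal to $p$) over a reflected base lying in a tilted hyperplane, so $K\cap\refl_{\mathcal{L}}(K)$ is the intersection of two cones with misaligned axes.

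The hardest step will be uniformly bounding the second family by $(2-2^{-1/n})^{-n}$. My approach would be to parametrise each cone by rays from a common reference point and reduce the intersection volume to a spherical integral $\int\min(r,r')^{n+1}\,d\omega$; using convexity of $K_0$ and a Brunn--Minkowski-type comparison, this should reduce to a one-dimensional optimisation in a scaling parameter $\alpha\in(0,1)$ that measures how deeply the reflected apex sits inside $K$. I would expect the extremum to be attained at $\alpha=2^{-1/n}$, the height parameter characterised by $\alpha^n=\tfrac12$ (balancing base-shrinkage against apex-shift), with optimal value $(2-2^{-1/n})^{-n}$; the factor $2-2^{-1/n}$ arises from combining the full base of $K$ with the shrunken base $\alpha K_0$ of the reflected sub-cone, while the exponent $-n$ reflects that only the base directions contribute to the volume deficit. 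Verifying that no tilted-mirror configuration exceeds this extremum — in particular handling mirrors whose tilt angle varies continuously between horizontal and vertical — is the main technical burden.

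Combining the two families yields $\sym_{k+1}(K)\le\max\{(2-2^{-1/n})^{-n},\sym_k(K_0)\}$; sending $\epsilon\to 0$ finishes the first assertion. For the ``in particular'' clause, iterate with $k=n-1$: the function $(2-2^{-1/n})^{-n}$ is decreasing in $n\ge 2$ with maximum value $1/(\tfrac{9}{2}-2\sqrt 2)\approx 0.598$, which is strictly below the lower bound $\tfrac{2}{41}(10+3\sqrt 2)\approx 0.695$ on $\sigma(2,1)$ from Theorem~\ref{thm:axiality}. Hence in each inductive step the maximum is realised by the $\sigma$ term, giving $\sigma(n+1,n)\le\sigma(n,n-1)\le\cdots\le\sigma(2,1)\le\tfrac{1}{3}(\sqrt 2+1)$.
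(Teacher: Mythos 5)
Your construction (a tall pyramid over a near-extremal base) is the same as the paper's, but your case split does not work as stated, and the gap is not just technical. You put only the \emph{exactly} vertical mirrors through the apex $p$ into the first family, and then claim every remaining $(k{+}1)$-flat gives overlap ratio at most $\brax{2-2^{-1/n}}^{-n}<0.6$. That uniform bound is false: take $K_0$ a Euclidean ball (so $\sym_k(K_0)=1$) and let $\mathcal{L}=\mathcal{L}_0\times\R$ be vertical but with $\mathcal{L}_0$ at distance $\delta>0$ from the center $x_0$. This mirror lies in your second family for every $\delta>0$, yet as $\delta\to 0$ the overlap ratio tends to $1$ by continuity. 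The same problem occurs for mirrors through $p$ tilted by an arbitrarily small angle. So the second family cannot be bounded away from $\sym_k(K_0)$, and the constant $\brax{2-2^{-1/n}}^{-n}$ can only be expected for mirrors that are close to \emph{horizontal}, where the reflection essentially folds the cone onto itself and a one-dimensional optimization over the fold height produces exactly that value. The paper's proof repairs this by working with an apex height $D\to\infty$ and an angular threshold $\theta_D\to 0$: mirrors within $\theta_D$ of vertical are handled slice-by-slice, using a stability lemma (Lemma~\ref{lem:wiggle}) to show each slice's symmetry is within $O(\ve)$ of $\sym_k(K_n)$; mirrors within $\theta_D$ of horizontal give the fold computation yielding $\brax{2-2^{-1/n}}^{-n}$; and intermediate angles are crushed to overlap at most $1/2$ by a cylinder/skew argument. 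None of these three ingredients survives with a fixed height $h$ and your two-family split, so the proposal as written cannot be completed along the lines you describe.

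A secondary, smaller point: in the ``in particular'' step you assert that the maximum is realised by the $\sigma$ term at every stage, i.e.\ $\sigma(n{+}1,n)\le\sigma(n,n{-}1)$; this is not known, since the lower bound $\sigma(n,n-1)\ge\frac{1}{2n}$ eventually drops below $\brax{2-2^{-1/n}}^{-n}$. What the induction actually gives is the weaker (but sufficient) statement that both terms of the maximum are at most $\sigma(2,1)$, because $\brax{2-2^{-1/n}}^{-n}\le\brax{2-2^{-1/2}}^{-2}<0.6<\frac{2}{41}(10+3\sqrt{2})\le\sigma(2,1)$, hence $\sigma(n,n-1)\le\sigma(2,1)$ for all $n$.
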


The best known lower bound on $\sigma(n,n-1)$ is given by \eqref{eqn:axlb}, which decays to zero as $n \to \infty$. An interesting problem is to determine if $\sigma(n,n-1)$ remains bounded away from zero as $n \to \infty$.

Lassak \cite{Las} defined a ``folding" measure of hyperplane mirror symmetry $\sym_{\text{fold}}(K)$, which is defined as twice the volume ratio of the largest portion of $K$ that can be folded into $K$ by reflection about a hyperplane to the original body $K$. Let
\begin{equation*}
    \varphi(n) = \inf_{K \subset \R^n\text{ convex body}} \sym_{\text{fold}}(K)\, ,
\end{equation*}
be the minimum symmetry among all $n$-dimensional unit volume convex bodies. Lassak \cite[Theorem~3]{Las} proved that $\varphi(2) \ge 1/4$ but did not provide any upper bound. An upper bound of $1/2$ was shown \cite{MN}, but as we will discuss later, this result is incorrect. We improve the lower bound and repair the upper bound on $\varphi(2)$. And we also provide an improved lower bound for centrally symmetric bodies.

\begin{theorem}
\label{thm:fold}
    Any planar convex body has folding symmetry at least $3/8$. There is a sequence of parallelograms for which the folding symmetry values approach $1/\phi$, where $\phi$ is the golden ratio. Therefore, $3/8 \le \varphi(2) \le 1/\phi < 0.61804$. Furthermore, any centrally symmetric planar convex body has folding symmetry at least $4/9$. 
\end{theorem}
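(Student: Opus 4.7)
For the general lower bound $\sym_{\text{fold}}(K) \ge 3/8$, the plan is to refine Lassak's argument that established $1/4$. Given a convex body $K$, the goal is to exhibit a line $\ell$ and a piece $K^+$ of $K$ cut by $\ell$ satisfying $\refl_\ell(K^+) \subseteq K$ with $\vol(K^+) \ge \frac{3}{16}\vol(K)$. A natural strategy is to consider a one-parameter family of candidate axes — for instance, lines through a fixed boundary point of $K$, or parallel lines in a fixed direction indexed by their offset — and argue that at least one axis in the family achieves the required fold. Convexity is used to verify the containment $\refl_\ell(K^+) \subseteq K$ and to quantify $\vol(K^+)$ via chord-length or cross-section comparisons. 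The improvement from $1/4$ to $3/8$ should come from considering several candidate axes simultaneously and picking the best, rather than committing to a single extremal axis.

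For the upper bound $\varphi(2) \le 1/\phi$, I would construct an explicit family of parallelograms $P_t$ (say, with vertices $(0,0)$, $(1,0)$, $(1+t,1)$, $(t,1)$) and compute $\sym_{\text{fold}}(P_t)$ by analyzing all candidate fold axes. Since $K^+$ and its reflection $\refl_\ell(K^+)$ are both polygons, the condition $\refl_\ell(K^+) \subseteq K$ reduces to finitely many vertex-containment inequalities, and the fold ratio becomes an explicit function of the axis parameters. A case analysis over axis directions — vertical, horizontal, parallel to the shear, and diagonal — should show that the optimal fold over $P_t$ approaches $1/\phi$ along a sequence of $t$ values tending to a critical shear parameter. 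The golden ratio appears naturally from equating fold ratios along two competing candidate axes, which reduces to a quadratic whose root involves $\phi$.

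For the centrally symmetric case $\sym_{\text{fold}}(K) \ge 4/9$, I would exploit central symmetry about the center $O$ of $K$. A key observation is that if $\ell$ is a fold axis passing through $O$ and $K^+$ is a whole half of $K$, then the fold is valid if and only if $K$ is axially symmetric about the line through $O$ perpendicular to $\ell$; hence for centrally-but-not-axially-symmetric $K$, the optimal fold axis avoids $O$. Central symmetry further implies that fold axes occur in pairs under reflection through $O$ with equal fold ratios, and this extra structure can be exploited in an averaging-type argument to strengthen the general bound from $3/8$ to $4/9$.

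The hard part will be the first claim: sharpening Lassak's $1/4$ to $3/8$ requires a delicate choice of axis that balances the competing requirements that $K^+$ be large (at least $\tfrac{3}{16}\vol(K)$) and that $\refl_\ell(K^+)$ fit inside $K$. A naive choice of axis based on a single extremal datum is unlikely to suffice, so a refined analysis over an axis family combined with a sharper convexity estimate will be needed; the details of this case analysis are where the bulk of the work lies.
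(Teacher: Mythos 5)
Your outline for the upper bound is close in spirit to the paper's: an explicit family of parallelograms, a finite case analysis of fold axes, and a quadratic from equating two competing fold ratios whose root is $2-\phi$. But your specific family $(0,0),(1,0),(1+t,1),(t,1)$ cannot work as stated: the paper shows that for a parallelogram with normalized base and height $h$ the folding symmetry is $\max\bigl\{\tfrac{1}{1-d_1+\sqrt{1-h^2}},\,\sqrt{d_1^2+h^2},\,1-d_1\bigr\}$, and the value $1/\phi$ is only approached by sending $h\to 0$ (a degenerating, nearly flat family) with the shear fixed at $d_1=2-\phi$; shearing alone at fixed aspect ratio does not reach the infimum. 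You should also be careful with the case enumeration: the fold cutting both long sides is only feasible when $d_2\ge d_1$, and omitting this feasibility condition is precisely the error the paper identifies in the earlier claim that $\varphi(2)\le 1/2$.

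The two lower bounds are where your proposal has genuine gaps. For $\varphi(2)\ge 3/8$, "consider a one-parameter family of axes and pick the best" is not an argument; the paper's proof inscribes Lassak's axially regular hexagon, extends it to a six-pointed star, introduces roughly twenty variables recording cap areas, extremal coordinates, and fold positions in four directions (two vertical, two horizontal), derives a long list of linear and quadratic constraints from convexity, and then certifies the bound $\lambda\ge 0.18803$ by solving the resulting non-convex program with a branch-and-bound solver. No single extremal axis or simple averaging over a family is known to give $3/8$; the improvement over $1/4$ comes from playing several fold directions against each other inside an optimization, and your sketch contains none of this machinery. For the centrally symmetric bound $4/9$, your proposed "averaging over paired axes" has no concrete mechanism for producing the constant. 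The paper instead proves that a centrally symmetric body of area $1$ admits an inscribed rectangle of \emph{any} prescribed area $r\le 1/2$ (a nontrivial continuity argument using a skew function on the boundary, which is exactly where central symmetry is used), chooses $r=4/9$, shows some cap has area at least $\tfrac14(\sqrt{1-2r}+1-r)=2/9$, and then folds that cap either along the rectangle's side or along a parallel chord pushed into the cap, in either case capturing area $2/9$. Without the inscribed-rectangle lemma and the large-cap estimate, the constant $4/9$ does not emerge from symmetry of the axis family alone.
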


The proofs of the lower and upper bounds in Theorem~\ref{thm:axiality} are discussed in Section~\ref{sec:axiality}. The bound in high dimensions is proved in Section~\ref{sec:high}. And the bounds for folding symmetry are proved in Section~\ref{sec:fold}.

\section{Axiality in the plane}
\label{sec:axiality}

We prove lower and upper bounds on the lowest value of axiality for plane convex bodies.

\subsection{Lower Bound}
\label{subsec:ax-lb}

We say a nondegenerate hexagon $ABCDEF$ is \emph{axially regular} if (i) the line $CF$ is an axis of symmetry for the hexagon, (ii) segments $\overline{AB}$, $\overline{CF}$, and $\overline{DE}$ are parallel, and (iii) $2|\overline{AB}| = |\overline{CF}| = 2|\overline{DE}|$. Lassak shows that an axially regular hexagon can be inscribed in every plane convex body and that such hexagons have at least $2/3$ the area of the convex body~\cite[\S 3]{Las}. By gluing triangles to axially regular hexagons, and considering triangles that well-approximate a convex body, we achieve an improved lower bound on planar axial symmetry.

Our method is to write down a \hyperref[axialprogram]{convex program} that captures our new ideas. Let $K \subset \mathbb{R}^2$ be a convex body. The objective of the optimization will be to minimize the variable $\lambda$, representing the area ratio of an axially symmetric convex body contained in $K$. By the aforementioned result of Lassak, we can find an axially regular hexagon inscribed in $K$. Let the vertices of the hexagon be $A,B,C,D,E,F$ as shown in Figure~\ref{axialhex}. We normalize so that the inscribed hexagon has unit area. Extend the edges of the hexagon to form a six-pointed star, with vertices labeled $A',B',C',D',E',F'$ in counterclockwise order as shown. Without loss of generality, assume $|\overline{AA'}| \geq |\overline{BC'}|$. The main variables of the program will be the area of $K$ contained in the 6 points of the star. Let $a,b,c,d,e,f$ be these areas in counterclockwise order, beginning with $a$ as the area of $K$ contained in triangle $AFA'$. Let $t = a+b+c+d+e+f$ be the total area of $K$ outside of the hexagon $ABCDEF$.

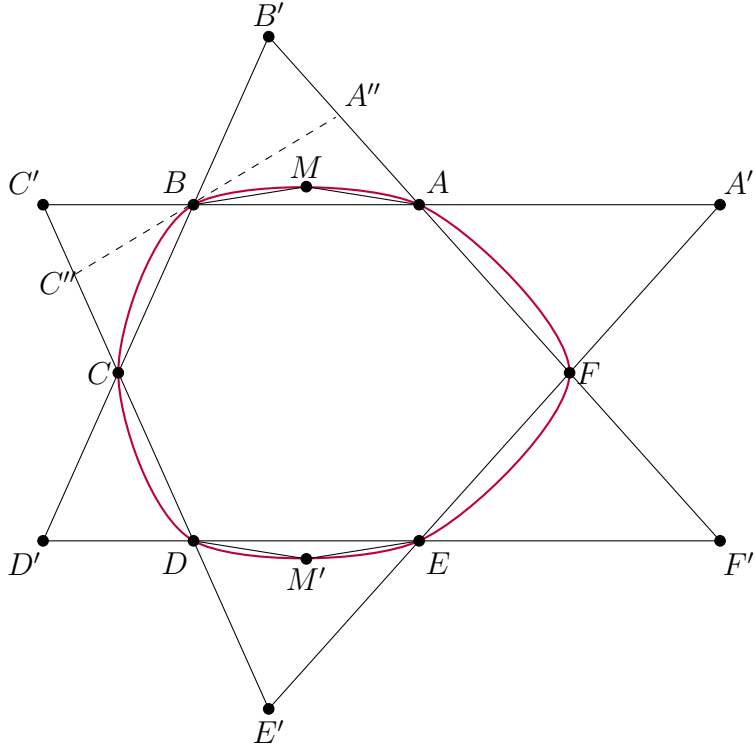
\begin{figure}[H]
\begin{center}
\begin{tikzpicture}[scale=1]

\def\x{3};
\def\s{0.5};
\def\h{2.23606};

\draw[thick, purple] plot [smooth cycle] coordinates {(-\x/2,\h) (\x/2,\h) (\x+\s,0) (\x/2,-\h) (-\x/2,-\h) (-\x+\s,0)};

\draw 
(-\s,-2*\h)--(-1.5*\x+2*\s,\h)
(-\s,-2*\h)--(1.5*\x+2*\s,\h)
(-\s,2*\h)--(-1.5*\x+2*\s,-\h)
(-\s,2*\h)--(1.5*\x+2*\s,-\h)
(1.5*\x+2*\s,-\h)--(-1.5*\x+2*\s,-\h)
(1.5*\x+2*\s,\h)--(-1.5*\x+2*\s,\h);

\filldraw
(\x/2,\h) circle(2pt)
(-\x/2,\h) circle(2pt)
(\x/2,-\h) circle(2pt)
(-\x/2,-\h) circle(2pt)
(-\x+\s,0) circle(2pt)
(\x+\s,0) circle(2pt)
(-1.5*\x+2*\s,\h) circle(2pt)
(-1.5*\x+2*\s,-\h) circle(2pt)
(1.5*\x+2*\s,\h) circle(2pt)
(1.5*\x+2*\s,-\h) circle(2pt)
(-\s,2*\h) circle(2pt)
(-\s,-2*\h) circle(2pt);

\draw
(\x/2+0.25,\h+0.3) node{$A$}
(-\x/2-0.25,\h+0.3) node{$B$}
(\x/2+0.25,-\h-0.3) node{$E$}
(-\x/2-0.25,-\h-0.3) node{$D$}
(-\x-0.25+\s,0) node{$C$}
(\x+\s+0.25,0) node{$F$}
(-1.5*\x+2*\s-0.25,\h+0.3) node{$C'$}
(-1.5*\x+2*\s-0.25,-\h-0.3) node{$D'$}
(1.5*\x+2*\s+0.25,\h+0.3) node{$A'$}
(1.5*\x+2*\s+0.25,-\h-0.3) node{$F'$}
(-\s,2*\h+0.3) node{$B'$}
(-\s,-2*\h-0.3) node{$E'$}
(0.75,3.7) node{$A''$}
(-3.3,1.2) node{$C''$};

\def\a{1.5725};
\def\b{0.925};
\draw[dashed]
(-\x/2-\a,\h-\b)--(-\x/2+\a+0.32,\h+\b+0.24);

\filldraw
(0,-\h-0.24) circle(2pt)
(0,\h+0.24) circle(2pt);

\draw
(0,-\h-0.24)--(\x/2,-\h)
(0,-\h-0.24)--(-\x/2,-\h)
(0,-\h-0.5) node{$M'$}
(0,\h+0.24)--(\x/2,\h)
(0,\h+0.24)--(-\x/2,\h)
(0,\h+0.5) node{$M$};

\end{tikzpicture}
\end{center}
\caption{An axially regular hexagon inscribed in the convex body $K$.}
\label{axialhex}
\end{figure}

Let the supporting line of $K$ at $B$ intersects $\overline{AB'}$ at $A''$ and $\overline{CC'}$ at $C''$. The sum of the areas $|\Delta AA''B|$ and $|\Delta BCC''|$ is at most $1/6$ since triangles $ABB'$ and $BCC'$ have areas at most $1/6$. This shows $b+c \leq 1/6$. Similarly, we have $d+e \leq 1/6$; this gives constraints \eqref{BCax} and \eqref{DEax}. The areas of $\Delta AFA'$ and $\Delta BCC'$ sum to $1/3$. We conclude 
\begin{equation*}
    a+b \leq |\Delta AFA'| \leq \frac{1}{3} - |\Delta BCC'| \leq \frac{1}{3} - d\, .
\end{equation*}
This shows $a+b+d \leq 1/3$, and by a similar argument $c+e+f \leq 1/3$; this verifies constraints \eqref{ABDax} and \eqref{CEFax}.

Now we add isosceles triangles to the hexagon while maintaining the axial symmetry about the line $CF$. Let the perpendicular bisector of $\overline{AB}$ meet the boundary of $K$ at $M$ and $M'$ as shown. By considering the supporting line of $K$ at $M$, we see that the area of triangle $ABM$ is at least $b/2$. Similarly, $\Delta DEM'$ has area at least $e/2$. The pair of isosceles triangles with bases $\overline{AB}$ and $\overline{DE}$, and height the minimum of the heights of triangles $ABM$ and $ABM'$, are contained in $K$ and symmetric about the line $CF$. It follows that $K$ contains an axially symmetric convex body of area at least $1 + \min\{b,e\}$. The idea of adding isosceles triangles can also be applied to the pairs of triangles $AFA',EFF'$ and $BCC',CDD'$. In summary, we can find an axially symmetric convex body of area at least
\begin{equation*}
    1 + \min\{a,f\}+ \min\{b,e\} + \min\{c,d\}\, ,
\end{equation*}
inside of $K$. This proves constraint \eqref{addax}.

If $K$ occupies a large portion of $\Delta A'C'E'$ or $\Delta B'D'F'$, then $K$ has almost as much axial symmetry as a triangle. We prove a short lemma to make this precise. 

\begin{lemma}
\label{subsetsym}
    Let $K \subseteq L \subset \mathbb{R}^2$ be convex bodies. For any line $\ell$, we have
    \begin{equation*}
        |K \cap \refl_\ell(K)| \geq |L \cap \refl_\ell(L)| - 2|L \setminus K|\, .    
    \end{equation*}
\end{lemma}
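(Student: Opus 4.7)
The plan is a direct set-theoretic computation; convexity is not actually needed for the bound, only the inclusion $K \subseteq L$ and measurability. Write $R = \refl_\ell$ for brevity. Since reflection is a measure-preserving involution, $|RL \setminus RK| = |L \setminus K|$.

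First, I would observe that $K \subseteq L$ implies $K \cap RK \subseteq L \cap RL$, so
\begin{equation*}
    |L \cap RL| - |K \cap RK| = \bigl|(L \cap RL) \setminus (K \cap RK)\bigr|.
\end{equation*}
Then I would rewrite the right-hand set using De Morgan:
\begin{equation*}
    (L \cap RL) \setminus (K \cap RK) = \bigl[(L \cap RL) \setminus K\bigr] \cup \bigl[(L \cap RL) \setminus RK\bigr].
\end{equation*}
The first piece is contained in $L \setminus K$, and the second is contained in $RL \setminus RK$. Applying subadditivity of Lebesgue measure and the measure-preserving property of $R$ yields
\begin{equation*}
    \bigl|(L \cap RL) \setminus (K \cap RK)\bigr| \le |L \setminus K| + |RL \setminus RK| = 2|L \setminus K|,
\end{equation*}
and rearranging gives the claimed inequality.

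There is no real obstacle here; the lemma is essentially a measure-theoretic triangle inequality comparing how the symmetric overlap of two nested sets differs. The only subtlety worth flagging is that the reflection $R$ must be measure-preserving, which is immediate since $R$ is an isometry. I would expect the proof in the paper to run along exactly these lines, perhaps in a single short paragraph.
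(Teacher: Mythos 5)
Your proof is correct and is essentially the same as the paper's: both arguments decompose the excess of $L \cap \refl_\ell(L)$ over $K \cap \refl_\ell(K)$ into two pieces, one contained in $L \setminus K$ and the other in its reflection, and conclude by subadditivity and the fact that reflection preserves measure. The paper phrases this by writing $L = K \cup (L\setminus K)$ and expanding the intersection, but the computation is the same and, as you note, convexity plays no role.
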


\begin{proof}
    Let $J = L \setminus K$. For notational ease, let $J^*$, $K^*$, and $L^*$ denote $\refl_\ell(J)$, $\refl_\ell(K)$, and $\refl_\ell(L)$, respectively. We have
    \begin{equation*}
        L \cap L^* = (K \cup J) \cap (K^* \cup J^*) = (K \cap K^*) \cup (K \cap J^*) \cup (J \cap (K^* \cup J^*))\, .
    \end{equation*}
    Since $|K \cap J^*|, |J \cap (K^* \cup J^*)| \leq |J|$ we have the desired result. 
\end{proof}

For any triangle $T \subset \mathbb{R}^2$, Buda and Mislow show that Sym$_1(T) \geq 2(\sqrt{2}-1)$~\cite{BM}. The area of triangle $A'C'E'$ is $3/2$, and so there is a line $\ell$ such that $|\Delta A'C'E' \cap \refl_\ell(\Delta A'C'E')| \geq 3(\sqrt{2}-1)$. The area inside $\Delta A'C'E'$ but outside of $K$ is at most $1/2 - a-c-e$. By Lemma~\ref{subsetsym}, this implies there is an axially symmetric convex subset of $K$ inside of triangle $A'C'E'$ of size at least
\begin{equation*}
    3(\sqrt{2}-1) - 2(1/2 - a-c-e) = 3\sqrt{2}-4 + 2(a+c+e)\, .
\end{equation*}
A similar argument applies to triangle $B'D'F'$, and this verifies constraints \eqref{triace} and \eqref{tribdf}.

\phantomsection
\label{axialprogram}
\begin{align}
\textsc{Axial symmetry } & \textsc{program.} \nonumber \\
\textsc{Variables: } & \lambda,\,a,\,b,\,c,\,d,\,e,\,f,\,t \nonumber \\
\textsc{Minimize: } & \lambda \nonumber \\
\textsc{Subject to: } & a,\,b,\,c,\,d,\,e,\,f \geq 0, \nonumber \\
& t = a+b+c+d+e+f\,, \nonumber \\
& b+c \leq 1/6\,, \label{BCax} \\
& d +e \leq 1/6\,, \label{DEax} \\
& a+ b+d \leq 1/3\,, \label{ABDax} \\
& c+e +f \leq 1/3\,, \label{CEFax} \\
& (1+t) \lambda \geq 1 + \min\{a,f\} + \min\{b,e\} + \min\{c,d\}\,, \label{addax} \\
& (1+t) \lambda \geq  3\sqrt{2} - 4 + 2(a+c+e)\,, \label{triace} \\
& (1+t) \lambda \geq 3 \sqrt{2}-4 + 2(b+d+f)\,. \label{tribdf}
\end{align}

\vspace{10pt}
\begin{prop}
\label{prop:ax-lower}
    $\sigma(2, 1) \ge \frac{2}{41}(10 + 3 \sqrt{2})$.
\end{prop}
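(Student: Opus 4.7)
The plan is to solve the convex program above, showing that $\lambda \ge C^* := \frac{2}{41}(10+3\sqrt{2})$ at every feasible point. The difficulty is that (\ref{addax}) contains three $\min$ terms, so I would case-split on which of the two arguments attains each minimum, giving eight cases in principle. Before tackling cases, I would exploit two symmetries of the program: the substitution $(a,b,c,d,e,f) \mapsto (f,e,d,c,b,a)$ interchanges (\ref{BCax}) with (\ref{DEax}), (\ref{ABDax}) with (\ref{CEFax}), and (\ref{triace}) with (\ref{tribdf}); while the substitution $(a,b,c,d,e,f) \mapsto (a,d,e,b,c,f)$ interchanges only (\ref{BCax}) with (\ref{DEax}) and preserves every other constraint — the $\min$ sum in (\ref{addax}) is invariant because $\min\{b,e\}+\min\{c,d\}$ is symmetric under swapping the two pairs. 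Together these involutions generate a Klein four-group, reducing the eight cases of minima to three orbits, represented by $(a,b,c)$, $(a,b,d)$, and $(a,e,c)$.

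The extremal case is $(a,b,c)$. I would take the weighted combination $\frac{4-C^*}{4}(\ref{addax}) + \frac{C^*}{4}(\ref{tribdf})$ to obtain a lower bound on $(1+t)\lambda$; after subtracting $C^*(1+t)$ and collecting terms, the coefficients of $a,b,c$ are $\frac{4-5C^*}{4}, \frac{4-3C^*}{4}, \frac{4-5C^*}{4}$, all nonnegative, while those of $d,e,f$ are $-\frac{C^*}{2}, -C^*, -\frac{C^*}{2}$. The worst configuration is therefore $a=b=c=0$, and the residual is $-\frac{C^*}{2}(d + 2e + f)$; using $(d+e) + (e+f) \le \frac{1}{6} + \frac{1}{3} = \frac{1}{2}$ from (\ref{DEax}) and (\ref{CEFax}), this residual is at least $-\frac{C^*}{4}$. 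The resulting scalar inequality reduces to $C^*(10 - 3\sqrt{2}) \le 4$, which is an equality by the very definition of $C^*$. The extremum is attained at $a=b=c=0$, $d = \frac{14-9\sqrt{2}}{12}$, $e = \frac{3\sqrt{2}-4}{4}$, $f = \frac{16-9\sqrt{2}}{12}$, with (\ref{addax}), (\ref{tribdf}), (\ref{DEax}), and (\ref{CEFax}) all tight.

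For the remaining orbits the bound strictly exceeds $C^*$. In the case $(a,b,d)$, (\ref{addax}) yields $(1+t)\lambda \ge 1 + (a+b+d)$, and (\ref{CEFax}) gives $c+e+f \le \frac{1}{3}$, so
\[
    \lambda \ge \frac{1+(a+b+d)}{1+(a+b+d)+(c+e+f)} \ge \frac{1}{1+\tfrac{1}{3}} = \frac{3}{4} > C^*.
\]
In the case $(a,e,c)$ one has $b+d+f \ge a+c+e$, and the combination $C^*(\ref{addax}) + (1-C^*)(\ref{tribdf})$ cancels all dependence on $a+c+e$; using $b+d+f \le \frac{2}{3}$ from (\ref{ABDax})$+$(\ref{CEFax}), the residual is equivalent to $225\sqrt{2} \ge 316$, which holds. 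The main obstacle is identifying the correct weights in the extremal case; I would find them either by solving the KKT conditions of the linear program obtained after case-fixing the minima, or equivalently by guessing a tight example and back-solving for the dual multipliers.
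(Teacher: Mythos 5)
Your proof is correct and follows essentially the same route as the paper's Appendix~\ref{axialanalysis}: both arguments case-split on which argument of each minimum in \eqref{addax} is active (you via a Klein four-group of program symmetries reducing eight cases to three, the paper via pairwise comparisons of $a,f$ and $b,e$ plus a WLOG on $c,d$) and then certify the bound by exhibiting nonnegative combinations of constraints, i.e.\ dual multipliers for the resulting linear program. Your fixed-weight certificate $\frac{4-C^*}{4}\eqref{addax}+\frac{C^*}{4}\eqref{tribdf}$ in the extremal case checks out --- the identity $C^*(10-3\sqrt{2})=4$ and your tight point with $t=(6-3\sqrt{2})/4$ match the paper's optimum exactly --- and amounts to a mild streamlining of the paper's $t$-parametrized dual.
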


\begin{proof}
    We solve the above \hyperref[axialprogram]{program} and prove the correctness of our solution in Appendix~\ref{axialanalysis}. Our idea is to treat the program as a linear program parametrized by $t$, and then determine the optimal objective value for the dual linear program in terms of $t$.
\end{proof}

\subsection{Upper bound}

In \cite{BM}, Choi improved the upper bound on $\sigma(2,1)$ to approximately 0.81584 using a computer assisted proof, providing the first example with axiality lower than $2\sqrt{2}-2$. He also conjectured that the true value of $\sigma(2,1)$ is around $0.81$. 

We implemented a basic algorithm to iteratively search for low symmetry shapes, using simulated annealing. The program is available on GitHub \cite{program}, and can be used to compute the axiality of polygons, including Choi's example, as well as run our search for lower symmetry ones. We were able to run this search to find new polygons with axiality $< 0.806$, one of which is pre-loaded as one launches the program.

We initially intended to present a computational proof of a lower symmetry body in a similar fashion to Choi. However, we later discovered a family of polygons which has lower axiality than anything we obtained experimentally. The family becomes flat as it approaches the minimum axiality, which means the program would never obtain this number, unless there is a different better construction.

\begin{prop}
\label{prop:r2part2}
    $\sigma(2,1)\leq \frac{1}{3}(\sqrt{2} + 1) \approx 0.8047378541$.
\end{prop}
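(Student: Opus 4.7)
The plan is to exhibit an explicit one-parameter family of convex quadrilaterals $\{Q_t\}$ (for $t>0$ small) whose axial symmetry ratios approach $\frac{1}{3}(\sqrt{2}+1)$ as $t \to 0$, and which become flat (degenerate to a line segment) in the limit. Since $\sigma(2,1) = \inf_K \sym_1(K)$, producing such a sequence immediately yields the desired upper bound, matching the wording of Theorem~\ref{thm:axiality}.

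First, I would specify the four vertices of $Q_t$ explicitly in terms of $t$ and two auxiliary shape parameters, chosen so that $Q_t$ is convex for all sufficiently small $t>0$ and has vertical extent of order $t$. The two free parameters will be tuned so that several distinct candidate reflection axes all attain the same overlap ratio in the limit, and that common limiting value will turn out to be $\frac{1}{3}(\sqrt{2}+1)$; this is the mechanism by which the number $\frac{1}{3}(\sqrt{2}+1)$ arises.

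Next, I would reduce the axiality calculation to a finite check. For any convex polygon $P$, the overlap $|P \cap \refl_\ell(P)|$, regarded as a function of the two parameters $(\theta,s)$ describing the line $\ell$, is piecewise rational, with breakpoints along the arrangement of lines determined by pairs of vertices of $P$. Local maxima of the overlap function can only occur at \emph{critical} configurations of $\ell$: those axes that pass through a vertex of $P$, are parallel to an edge of $P$, or are positioned so that a vertex of $\refl_\ell(P)$ falls on an edge of $P$. For the quadrilateral $Q_t$, this gives only finitely many candidate axes, each of which I can analyze in closed form as a function of $t$.

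The principal technical obstacle is the singular behavior as $t \to 0$: since $|Q_t|$ is of order $t$, one must track the overlap along each candidate axis carefully to identify which one maximizes the ratio $|Q_t \cap \refl_\ell(Q_t)|/|Q_t|$ for all sufficiently small $t$, rather than only in a formal limit. I would address this by expanding the overlap for each candidate as a power series in $t$, normalizing against $|Q_t|$, and verifying that the leading coefficient is at most $\frac{1}{3}(\sqrt{2}+1)$, with equality attained precisely at the axes for which the shape parameters were tuned. Passing to the limit then gives $\sym_1(Q_t) \to \frac{1}{3}(\sqrt{2}+1)$, establishing the proposition.
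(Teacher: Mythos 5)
Your high-level strategy coincides with the paper's: exhibit a family of convex quadrilaterals that flattens as a parameter tends to $0$ and whose axiality tends to $\frac{1}{3}(\sqrt{2}+1)$. (The paper uses the quadrilateral with vertices $(0,0)$, $(1,0)$, $\bigl(1,\frac{\sqrt{2}}{1+\sqrt{2}}\ve\bigr)$, $\bigl(\frac{1}{\sqrt{2}},\ve\bigr)$ and sends $\ve\to 0$.) However, as written your proposal is a plan rather than a proof: you never specify the vertices, never carry out the ``tuning'' of the two shape parameters, and never derive the number $\frac{1}{3}(\sqrt{2}+1)$ from anything. Since the entire content of the proposition is a concrete example together with a certified upper bound on its axiality, the explicit coordinates and the verification are not details to be deferred --- they are the proof.

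There is also a genuine mathematical gap in your proposed reduction to a finite check. You claim that local maxima of $(\theta,s)\mapsto |P\cap\refl_\ell(P)|$ can only occur at ``critical configurations'' where the axis passes through a vertex, is parallel to an edge, or places a reflected vertex on an edge. Those configurations are only the breakpoints of the cell decomposition on which the overlap is piecewise rational; within a single cell the overlap is a smooth (for a quadrilateral, essentially quadratic in the translate $s$) function that can and does attain its maximum at an \emph{interior} critical point with no incidence between vertices and edges. This is exactly what happens in the paper's analysis: for each fixed angle the overlap is written as a quadratic in the translate and maximized in the interior, and the resulting maximum is then bounded as a function of the angle over each angular range. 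With only your listed candidates you cannot rule out all other axes, so the finite check you describe does not certify the bound. To repair the argument you would need to maximize the overlap over each combinatorial cell (including interior critical points), and then carry out the asymptotic comparison in $\ve$ for every cell --- which is precisely the case analysis occupying Appendix~\ref{app:axcomp} of the paper.
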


\begin{proof}
    We consider the quadrilateral $\set{(0,0), (1,0), \left(1, \frac{\sqrt{2}}{1 + \sqrt{2}} \ve\right), \left(\frac{1}{\sqrt{2}},\ve\right)}$, and show that by sending $\ve\to 0$, its axiality approaches the desired number.

    \begin{figure}[H]
        \centering
        \includegraphics[scale=0.54]{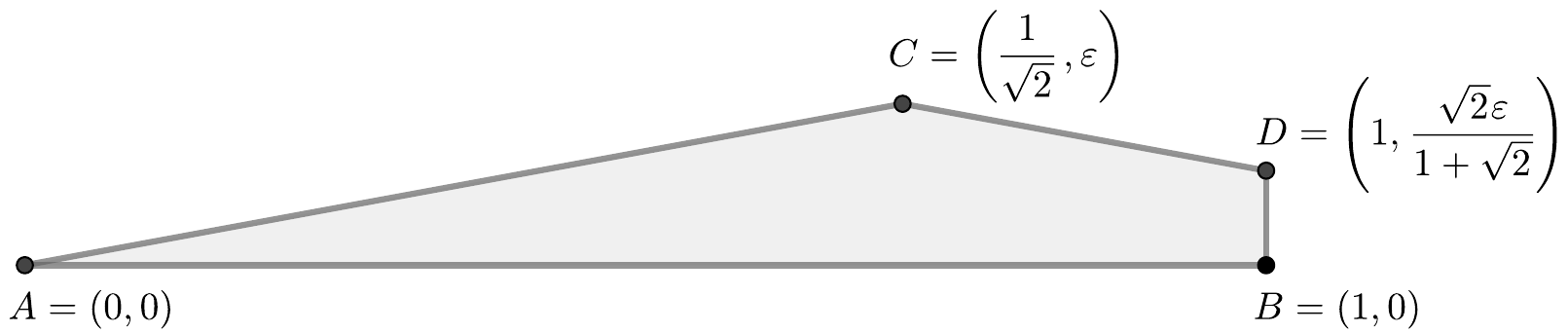}
        \caption{A family of quadrilaterals parametrized by $\ve$, which yields a new upper bound for $\sigma(2,1)$.}
        \label{fig:2Dupperbound}
    \end{figure}

    To do this, one must compute the area of overlap in every combination of angle and translation of the reflecting line. The computation is cumbersome, but routine. We have placed our analysis in Appendix \ref{app:axcomp}. In short, one finds that reflections in the angle bisector of $\angle CAB$ and the vertical line $x = 2/3$ both either attain or approach this number, and every other reflection line yields smaller overlap. 
\end{proof}

Proposition~\ref{prop:r2part2} together with Proposition~\ref{prop:ax-lower} yields Theorem~\ref{thm:axiality}.

\section{Folding symmetry in the plane}
\label{sec:fold}

Lassak defined the folding symmetry of a convex body in \cite{Las}. It is an alternative measure of hyperplane symmetry. Define the \emph{folding symmetry} of $K$ by
\begin{equation*}
    \sym_{\fold}(K) = \frac{2}{\vol_n(K)}\sqrax{\max_{\text{Halfspaces } \mathcal{H}} \set{\vol_n(K \cap \mathcal{H}): \refl_{\partial \mathcal{H}}(K \cap \mathcal{H}) \subset K}}\, .
\end{equation*}
We are interested in the minimal symmetry possible among all bodies, so define
\begin{equation*}
    \varphi(n) = \inf_{K \subset \R^n\text{ convex body}}\sym_{\fold}(K)\, .
\end{equation*}
Note the range of possibilities for $\varphi(n)$ is $[0,1]$. Considering the case of plane convex bodies, Lassak made an initial improvement on this, showing $\varphi(2) \in [1/4,1]$. 

\subsection{Lower Bound} We first improve the lower bound by setting up another \hyperref[foldprog]{optimization} problem. We begin as before with a convex body $K$ and an inscribed axially regular convex hexagon, normalized to have area $1$. Reuse the same labelling of the hexagon and six-pointed star as in Figure~\ref{axialhex}. It will be convenient to discuss our constraints when the hexagon is embedded in a coordinate plane. Let the vertices $A$, $B$, $D$, and $E$ of the hexagon in Figure~\ref{axialhex} have coordinates $(w/2,h)$, $(-w/2,h)$, $(-w/2,-h)$, and $(w/2,-h)$, respectively. Suppose that the midpoint of $\overline{CF}$ is $(w u,0)$, for some $0 \leq u \leq 1/2$. This determines the coordinates of all other vertices in Figure~\ref{axialhex}; we display the resulting coordinates in Figure~\ref{foldhex}. Note that since the hexagon has area $1$, we have $hw = 1/3$. Let $a,b,c,d,e,f,t$ have the same meaning as in Section~\ref{subsec:ax-lb}. The solution to Program~\ref{axialprogram} proved in Appendix~\ref{axialanalysis} gives constraints~\eqref{tFoldCon}. We denote by $\lambda$, the halved folding symmetry of $K$; it is the objective function of our program. 

\begin{figure}[!t]
\begin{center}
\begin{tikzpicture}[scale=1]

\def\x{3};
\def\s{0.5};
\def\h{2.23606};

\draw[thick, purple] plot [smooth cycle] coordinates {(-\x/2,\h) (\x/2,\h) (\x+\s+0.9,\h*0.9,) (\x+\s,0) (\x/2,-\h) (-\x/2,-\h) (-\x+\s+0.3,-\h*0.4,) (-\x+\s,0)};

\draw 
(-\s,-2*\h)--(-1.5*\x+2*\s,\h)
(-\s,-2*\h)--(1.5*\x+2*\s,\h)
(-\s,2*\h)--(-1.5*\x+2*\s,-\h)
(-\s,2*\h)--(1.5*\x+2*\s,-\h)
(1.5*\x+2*\s,-\h)--(-1.5*\x+2*\s,-\h)
(1.5*\x+2*\s,\h)--(-1.5*\x+2*\s,\h);

\filldraw
(\x/2,\h) circle(2pt)
(-\x/2,\h) circle(2pt)
(\x/2,-\h) circle(2pt)
(-\x/2,-\h) circle(2pt)
(-\x+\s,0) circle(2pt)
(\x+\s,0) circle(2pt)
(-1.5*\x+2*\s,\h) circle(2pt)
(-1.5*\x+2*\s,-\h) circle(2pt)
(1.5*\x+2*\s,\h) circle(2pt)
(1.5*\x+2*\s,-\h) circle(2pt)
(-\s,2*\h) circle(2pt)
(-\s,-2*\h) circle(2pt)
(0,0) circle(2pt)
(\s,0) circle(2pt);

\draw
(\x/2+0.3,\h+0.4) node{\tiny $(\frac{w}{2},h)$}
(\x/2,\h-0.25) node{\tiny \color{blue}$A$}

(-\x/2-0.5,\h+0.4) node{\tiny$(-\frac{w}{2},h)$}
(-\x/2,\h-0.25) node{\tiny \color{blue}$B$}

(\x/2+0.45,-\h-0.4) node{\tiny$(\frac{w}{2},-h)$}
(\x/2,-\h+0.25) node{\tiny \color{blue}$E$}

(-\x/2-0.65,-\h-0.4) node{\tiny$(-\frac{w}{2},-h)$}
(-\x/2+0.1,-\h+0.25) node{\tiny \color{blue}$D$}

(-\x-1+\s,0) node{\tiny$(w(u-1),0)$}
(-\x-1+\s+1.3,0) node{\tiny \color{blue}$C$}

(\x+\s+1,0) node{\tiny$(w(u+1),0)$}
(\x-1+\s+0.7,0) node{\tiny \color{blue}$F$}

(-1.5*\x+2*\s-1.25,\h+0.3) node{\tiny$(w(2u-\frac{3}{2}),h)$}
(-1.5*\x+2*\s-1.25,\h) node{\tiny \color{blue}$C'$}

(-1.5*\x+2*\s-1.2,-\h-0.3) node{\tiny$(w(2u-\frac{3}{2}),-h)$}
(-1.5*\x+2*\s-1.25,-\h+0.1) node{\tiny \color{blue}$D'$}

(1.5*\x+2*\s+1.25,\h+0.3) node{\tiny$(w(2u+\frac{3}{2}),h)$}
(1.5*\x+2*\s+1.25,\h) node{\tiny \color{blue}$A'$}

(1.5*\x+2*\s+1.25,-\h-0.3) node{\tiny$(w(2u+\frac{3}{2}),-h)$}
(1.5*\x+2*\s+1.25,-\h+0.1)  node{\tiny \color{blue}$F'$}

(-\s,2*\h+0.3) node{\tiny$(-w u, 2h)$}
(-\s+0.05,2*\h-0.3)  node{\tiny \color{blue}$B'$}

(-\s,-2*\h-0.3) node{\tiny$(-w u, -2h)$}
(-\s+0.1,-2*\h+0.3)  node{\tiny \color{blue}$E'$}

(-0.5,\h-0.25) node{\tiny \color{blue}$P$}
(0.5,-\h+0.25) node{\tiny \color{blue}$Q$}

(\x+\s-0.25,0.865*\h-0.25) node{\tiny \color{blue}$X$}
(-\x+\s+0.25,-0.64*\h+0.25) node{\tiny \color{blue}$Y$}

(4.2,4.25) node{\tiny$x= \alpha w$}
(-2.6,4.25) node{\tiny$x= \beta w$}

(-0.5,0) node{\tiny$(0,0)$}
(\s+0.6,0) node{\tiny$(w u,0)$}
(5.4,0.74) node{\tiny$(\alpha w,\frac{2\alpha-2u-3}{2u+1}h)$}
(0.5-0.6,-\h-0.4) node{\tiny$(m_2 w, -h-\phi_E h)$}
(-0.5+0.35,\h+0.4) node{\tiny$(m_1 w, h+\phi_B h)$}
(\x+\s+0.63,0.865*\h+0.13) node{\tiny$(w(u+1), k_1h)$}
(-\x+\s+1.65,-0.64*\h+0.25) node{\tiny$(w(u-1), -k_2h)$};

\draw[dotted]
(-\x/2,\h)--(\x/2,-\h)
(\x/2,\h)--(-\x/2,-\h)
(4.13,4)--(4.13,-4)
(\x/2,\h)--(4.13,0.75)
(\x+\s,3)--(\x+\s,-3)
(-2.69,4)--(-2.69,-4)
(-\x+\s,3)--(-\x+\s,-3)
(-\x-2*\s, \h+2.5*0.165*\h)--(2*\x,\h-2.25*0.165*\h);

\draw
(4.13,0.7) circle(2pt);

\filldraw
(0.5,-\h-0.17) circle(2pt)
(-0.5,\h+0.16) circle(2pt)
(\x+\s,\h*0.835) circle(2pt)
(-\x+\s,-\h*0.64) circle(2pt);

\end{tikzpicture}
\end{center}
\caption{Inscribed axially regular hexagon with coordinates for the folding program.}
\label{foldhex}
\end{figure}

Suppose the rightmost and leftmost part of $K$ have $x$-coordinates $\alpha w$ and $\beta w$, respectively; this gives \eqref{alphaBetaFoldCon}. We will construct folds along vertical lines that capture as much area as possible from the right and left side of $K$. Let $P$ and $Q$ be the topmost and bottommost points of $K$; suppose their coordinates are $(m_1 w, (\phi_B+1) h)$ and $(m_2 w, -(\phi_E+1) h)$, respectively. 

\begin{lemma}
    The area in $K$ to the right of $x = v_1 w$ can be folded into the area on the left, where
    \begin{equation*}
       v_1 = \max \left\{\frac{2m_1+1}{4},\frac{2m_2+1}{4},\frac{2\alpha-1}{4}\right\}\, . 
    \end{equation*}
\end{lemma}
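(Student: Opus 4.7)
The plan is to verify that folding across the vertical line $x = v_1 w$ maps $K \cap \{x \geq v_1 w\}$ into $K$. Equivalently, I must show that for every $(x, y) \in K$ with $x \geq v_1 w$, the reflected point $(2v_1 w - x, y)$ also lies in $K$. By convexity of $K$, it suffices to exhibit a witness $(x^L, y) \in K$ satisfying $x^L \leq 2v_1 w - x$: the reflected point then lies on the horizontal segment between $(x^L, y)$ and $(x, y)$, which is contained in $K$. I would carry out the verification by splitting into three ranges of $y$, each handled by exactly one of the three bounds in the definition of $v_1$.

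For $y \in [-h, h]$, the diagonal $\overline{BD}$ of the hexagon is the vertical segment $\{-w/2\} \times [-h, h] \subset K$, so I take $x^L = -w/2$. The condition $-w/2 \leq 2v_1 w - x$ rearranges to $x \leq 2v_1 w + w/2$, which follows from $x \leq \alpha w$ and the bound $v_1 \geq (2\alpha - 1)/4$. For $y \in (h, (\phi_B + 1)h]$, convexity places the segment $\overline{BP}$ inside $K$; at height $y$ this segment passes through $(x_B(y), y)$ with $x_B(y) := -\tfrac{w}{2} + \tfrac{y - h}{\phi_B h}(m_1 + \tfrac{1}{2})w$, which I take as the witness. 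It then suffices to verify $r(y) \leq 2v_1 w - x_B(y)$, where $r(y)$ denotes the right endpoint of the horizontal slice $K_y$. Because $K$ is contained in the six-pointed star (as implicit throughout the setup of the program), $K \cap \{y > h\} \subseteq \Delta ABB'$, which forces $r(y) \leq w\bigl(\tfrac{1}{2} - \tfrac{y - h}{h}(u + \tfrac{1}{2})\bigr)$. Writing $\tau = (y - h)/h \in (0, \phi_B]$, the inequality simplifies to $\tau \bigl[\tfrac{m_1 + 1/2}{\phi_B} - (u + \tfrac{1}{2})\bigr] \leq 2v_1$. If the bracket is non-positive this is immediate; otherwise $\tau \leq \phi_B$ bounds the left side by $(m_1 + \tfrac{1}{2}) - \phi_B(u + \tfrac{1}{2}) \leq m_1 + \tfrac{1}{2} \leq 2v_1$, where the last step uses $v_1 \geq (2m_1 + 1)/4$. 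The range $y \in [-(\phi_E + 1)h, -h)$ is treated by an identical symmetric argument using $\overline{EQ} \subset K$, the star constraint $K \cap \{y < -h\} \subseteq \Delta DEE'$, and $v_1 \geq (2m_2 + 1)/4$.

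The main obstacle lies in the middle range, both in identifying the correct witness on $\overline{BP}$ and in applying star containment to bound $r(y)$ from above. A secondary but crucial point is that $m_1 \leq 1/2$, forced by $P \in \Delta ABB'$; together with $v_1 \geq (2m_1 + 1)/4$ this yields $v_1 \geq m_1$, ensuring that $P$ itself lies weakly to the left of the fold line and is therefore not part of the region being folded. Once these ingredients are in place, each of the three cases collapses to a short linear-inequality check.
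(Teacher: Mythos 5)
Your proof is correct and follows essentially the same three-case structure as the paper's (terse) argument: the strip $|y|\le h$ is handled via the rightmost extent $\alpha w$ and the chord at $x=-w/2$, and the regions above $AB$ and below $DE$ via the constraint $v_1 \ge (2m_1+1)/4$ (resp.\ $(2m_2+1)/4$), with your explicit witnesses on $\overline{BD}$ and $\overline{BP}$ merely fleshing out what the paper leaves implicit. The only cosmetic difference is that the paper anchors the top case on the unimodality of the upper boundary peaked at $x=m_1w$ rather than on the chord $\overline{BP}$ and the star containment $K\cap\{y>h\}\subseteq\Delta ABB'$, but these amount to the same convexity argument.
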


\begin{proof}
The average of $\alpha w$ and $-w/2$ is $w(2\alpha - 1)/4$. Hence the area in $K$ between $y = -h$ and $y = h$ belonging to the fold will be folded inside of $K$ because $v_1 \geq (2\alpha - 1)/4$. The boundary of $K$ between $B$ and $A$ is decreasing from $x = m_1w$ to $x = w/2$. Hence a fold along $x = (m_1w + w/2)/2 = w(2m_1 + 1) /4$ would fold area inside $K \cap \Delta ABB'$ into area inside $K \cap \Delta ABB'$. A symmetric situation occurs in $\Delta DEE'$.
\end{proof}

Larger $\alpha$ and smaller $\beta$ imply better positive lower bounds on $a+f$ and $c+d$, we calculate these next. Without loss of generality, suppose the line $x = \alpha w$ meets the boundary of $K$ inside $\Delta AFA'$. Area $a$ is made smallest when the boundary of $K$ meets line $x = \alpha w$ on segment $\overline{FA'}$, the coordinates of this point are $(\alpha w, (2\alpha - 2u-3)h/(2u+1))$. It follows that 
\begin{equation*}
    a \geq \frac{1}{2}(2u+1)hw - \frac{1}{2}hw (2u+1) \bigg( \frac{4u-2\alpha+3}{2u+1}\bigg) = \frac{\alpha-u-1}{3}\, .
\end{equation*}
Similarly, if we suppose that $x = \beta w$ meets the boundary of $K$ inside $\Delta BCC'$, then we obtain $c \geq (-\beta + u - 1)/3$. This proves constraints \eqref{sideAreaFoldCon}. 

It will be helpful to assume $v_1,v_2 \in [-1/2,1/2]$. Note that since $\beta \geq u-1 \geq -1$, we immediately obtain $(2\beta + 1)/4 \geq -1/4$ and so $v_2 \in [-1/2,1/2]$. On the other hand, suppose $\alpha > 3/2$. Then the area inside $K$ to the right of the folding line $x = (\alpha/2-1/4)w$ is at least
\begin{equation*}
    hw\bigg(u+1-\frac{1}{2}\bigg) - 2hw\bigg(\alpha/2-1/4-\frac{1}{2}\bigg) + \frac{\alpha-u-1}{3} = \frac{1}{3}\, .
\end{equation*}
From the axial symmetry \hyperref[axialprogram]{program}, or constraint \eqref{tFoldCon}, we know that $|K| = 1+t < 1.44$. Therefore we obtain the bound $\lambda > 0.23$ when $\alpha >3/2$. It follows that we may assume constraints \eqref{vMFoldCon}.

Next we calculate the area inside $K$ to the right of the folding line $x = v_1w$; it is the sum of the areas of a rectangle, the triangle $AEF$, and the part of $K$ inside triangles $AFA'$ and $EFF'$. The area is at least
\begin{equation*}
    2h\left( \frac{w}{2} - v_1 w \right) + \left( wu + \frac{w}{2}\right)h + a + f = \frac{1}{2} - \frac{2}{3} v_1 + \frac{1}{3} u + a + f\, .
\end{equation*}
A similar calculation gives a lower bound on the area to the left of $x = v_2 w$. This gives constraints \eqref{rightfold} and \eqref{leftfold}. 

Note that $b$ is less than equal to area of the trapezium formed by the line parallel to AB passing through $P$, and the lines $AB$, $AB'$, and $BB'$; this gives the first part of constraints \eqref{eqn:abtu}. The second part follows similarly. For constraints \eqref{eqn:abtl}, note that $b$ and $e$ are greater than equal to the areas of $\Delta APB$ and $\Delta DQE$, respectively. Constraints \eqref{eqn:st1} follow from the fact that $P$ lies below the lines $AB'$ and $BB'$. Constraints \eqref{eqn:sb1} follow similarly. 

We may assume without loss of generality that the supporting line of $K$ at $F$ makes an acute or right angle from the line $DE$ in the counterclockwise direction. Let
\begin{equation*}
    k_1 = \sup \{k \in [0,1]: (w(u+1), kh) \in \text{int}(K)\}\, ,
\end{equation*}
where we define the supremum of an empty set to be $0$. Let $X$ be the point on $\partial K$ with coordinates $(w(u+1), k_1h)$.

Suppose that the tangent to $\partial K$ at $C$ also makes an acute or right angle from the line $DE$ in the anticlockwise direction. Then we may define a point $Y$ with coordinates $(w(u-1), -k_2h)$ in a manner similar to $X$. Note that $a$ and $d$ are greater than equal to the areas of $\Delta AXF$ and $\Delta CYD$, respectively; this gives constraints \eqref{eqn:arll}. Constraint \eqref{eqn:st} arises from the fact that the absolute slope of $AX$ is larger than that of $AP$. Constraint \eqref{eqn:sb} follows similarly.

Let $y_1 = \max\{k_1, 1/2\}$. We claim that it is possible to fold the convex body from the top along the line $y = y_1 h$. Let the line $y = y_1 h$ intersect $\partial K$ in points $M$ and $N$ on the left and right, respectively. Let $M'$ and $N'$ be the projections of $M$ and $N$ onto the line $CF$, respectively. The conditions $y_1 \ge k_1$ and $y_1 \ge 0.5$ together ensure that the region $MBAN$ (with $MB$ and $AN$ arcs along $\partial K$) folds from the top into the rectangle $MNN'M'$ and the region $BPA$ (with $BP$ and $PA$ arcs along $\partial K$) folds into the rectangle $ABDE$. A similar argument shows that one can fold along the line $y = y_2 h$ from the bottom.

Next, the area of the fold obtained along the line $y = y_1 h$ is at least the sum of the areas of the region $BPA$ (with $BP$ and $PA$ arcs along $\partial K$) and the trapezium cut off from the hexagon $ABCDEF$ by the folding line $y = y_1 h$. This gives constraint \eqref{topfold}. Constraint \eqref{bottomfold} follows similarly.

\phantomsection
\label{foldprog}
\begin{align}
\textsc{Folding symmetry } & \textsc{program.} \nonumber \\
\textsc{Variables: } & \lambda,\,a,\,b,\,c,\,d,\,e,\,f,\,t,\,u,\,m_1,\,m_2,\,v_1,\,v_2,\,\alpha,\,\beta,\,\phi_B,\,\phi_E,\,k_1,\,k_2,\,y_1,\,y_2 \nonumber \\
\textsc{Minimize: } & \lambda  \nonumber \\
\textsc{Subject to: }
& a,\,b,\,c,\,d,\,e,\,f,\,u \in \left[0,\,\frac{1}{2}\right], \nonumber \\
& \alpha \in \left[u+1,\,2u + \frac{3}{2}\right], \quad \beta \in \left[2u-\frac{3}{2},\,u-1\right], \label{alphaBetaFoldCon} \\
&  v_1,\,v_2,\,m_1,\,m_2 \in \left[-\frac{1}{2},\,\frac{1}{2}\right], \label{vMFoldCon} \\
& \phi_B,\, \phi_E,\, k_1,\, k_2 \in [0,1]\,, \quad y_1,\, y_2 \in \left[\frac{1}{2},\, 1\right], \nonumber \\
& t = a+b+c+d+e+f\,, \quad t \le \frac{6-3\sqrt{2}}{4}\,,  \label{tFoldCon} \\
& a + f \geq \frac{\alpha-u-1}{3}\,, \quad c + d \geq \frac{-\beta+u-1}{3}\,, \label{sideAreaFoldCon} \\
& b \le \frac{1}{6} (1 - (1-\phi_B)^2), \quad e \le \frac{1}{6} (1 - (1-\phi_E)^2)\,, \label{eqn:abtu} \\
& b \ge \frac{\phi_B}{6}\,, \quad e \ge \frac{\phi_E}{6}, \label{eqn:abtl} \\
& a \ge \frac{k_1}{12}(2u + 1)\,, \quad d \ge \frac{k_2}{12}(1 - 2u)\,, \label{eqn:arll} \\
& (1-2m_1) \ge \phi_B (1+2u)\,, \quad (2m_1+1) \ge \phi_B (1-2u)\,, \label{eqn:st1} \\
& (1-2m_2) \ge \phi_E (1+2u)\,, \quad (2m_2+1) \ge \phi_E (1-2u)\,, \label{eqn:sb1} \\
& (1-2m_1) (1-k_1) \ge \phi_B (2u+1)\,, \label{eqn:st} \\ 
& (1+2m_2) (1-k_2) \ge \phi_E (1-2u)\,, \label{eqn:sb} \\
& v_1 = \max \left\{\frac{2m_1+1}{4},\,\frac{2m_2+1}{4},\,\frac{2\alpha-1}{4}\right\}\,, \nonumber \\
& v_2 = \min\left\{\frac{2m_1-1}{4},\,\frac{2m_2-1}{4},\,\frac{2\beta+1}{4}\right\}\,, \nonumber \\
& y_1 = \max \{k_1, \,1/2\}, \quad y_2 = \max \{k_2, \,1/2\}\,, \label{eqn:btf} \\
& (1+t) \lambda \geq \frac{1}{2}-\frac{2}{3}v_1+\frac{1}{3}u + a + f\,, \label{rightfold} \\
& (1+t) \lambda \geq \frac{1}{2}+\frac{2}{3}v_2-\frac{1}{3}u + c + d\,, \label{leftfold} \\
& (1+t) \lambda \ge b + \frac{1}{6}(1-y_1)(3-y_1)\,, \label{topfold} \\
& (1+t) \lambda \ge e + \frac{1}{6}(1-y_2)(3-y_2)\,. \label{bottomfold}
\end{align}

\vspace{10pt}
Finally, we consider the case when the tangent to $\partial K$ at $C$ makes an obtuse angle from the line $DE$ in the anticlockwise direction. In this case, the point $Y$ lies on the arc $BC$ with coordinates $(w(u-1), k_2h)$. By similar arguments as the previous case, we get the following constraints in lieu of \eqref{eqn:arll}, \eqref{eqn:sb}, and \eqref{eqn:btf}, respectively.
\begin{align*}
    &a \ge \frac{k_1}{12}(2u + 1)\,, \quad c \ge \frac{k_2}{12}(1 - 2u)\,, \\
    &(1 + 2m_1)(1 - k_2) \ge \phi_B (1 - 2u)\,, \\
    &y_1 = \max \{k_1, \,k_2,\, 1/2\}\,, \quad y_2 = 1/2\,.
\end{align*}

\begin{prop}
    \label{prop:foldpart1}
    $\varphi(2) \geq 3/8$.
\end{prop}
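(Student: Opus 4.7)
The approach mirrors that of Proposition~\ref{prop:ax-lower}: I treat the folding symmetry program as a parameterized linear program and verify that its optimum is at least $3/16$. Since $\lambda$ equals half the folding symmetry by definition, this yields $\varphi(2) \ge 3/8$. The argument proceeds by case analysis on how $v_1$ and $v_2$ are realized in their defining max/min expressions, followed by a suitable combination of the fold constraints \eqref{rightfold}--\eqref{bottomfold}.

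First I would handle the principal case in which $v_1 = (2\alpha-1)/4$ and $v_2 = (2\beta+1)/4$. Substituting into \eqref{rightfold} and applying the side-area bound in \eqref{sideAreaFoldCon} gives
\begin{equation*}
    (1+t)\lambda \ge \frac{2-\alpha+u}{3} + (a+f) \ge \frac{2-\alpha+u}{3} + \frac{\alpha-u-1}{3} = \frac{1}{3},
\end{equation*}
and symmetrically for \eqref{leftfold}. Combined with $t \le (6-3\sqrt{2})/4$ from \eqref{tFoldCon}, this yields $\lambda \ge \frac{2(10+3\sqrt{2})}{123}$, which comfortably exceeds $3/16$.

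Next I would address the complementary cases, in which the max in $v_1$ (or the min in $v_2$) is achieved by the $m_1$ or $m_2$ term. Geometrically the apex $P$ (or nadir $Q$) then lies far from the hexagon's axis, so the top-fold (or bottom-fold) constraint becomes decisive. I would combine \eqref{topfold} with the slope constraints \eqref{eqn:st1}--\eqref{eqn:st}, the definition $y_1 = \max\{k_1, 1/2\}$ from \eqref{eqn:btf}, and the lower bound \eqref{eqn:abtl} on $b$, treating the sub-cases $k_1 \le 1/2$ and $k_1 > 1/2$ separately. In each sub-case the remaining problem reduces to a linear program in the free variables, whose optimum I would bound via LP duality; the additional case arising from an obtuse tangent at $C$, recorded immediately after the program, is handled by the analogous variant listed there.

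The main obstacle is the case analysis. Several geometric regimes must be treated in parallel, corresponding to the different arguments realizing the max/min in $v_1, v_2$, the two cases for $y_1, y_2$, and the tangent direction at $C$. In the asymmetric regime $u > 0$ the right and left fold bounds become unequal, and they must be balanced carefully against the top and bottom bounds; as in Appendix~\ref{axialanalysis}, I would certify each sub-case by exhibiting a dual-feasible solution whose dual objective equals $3/16$.
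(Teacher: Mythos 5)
Your principal case is correct and complete: when $v_1=(2\alpha-1)/4$, substituting into \eqref{rightfold} and invoking \eqref{sideAreaFoldCon} gives $(1+t)\lambda\ge 1/3$, and \eqref{tFoldCon} then yields $\lambda\ge \frac{2}{123}(10+3\sqrt{2})\approx 0.2316>3/16$ (symmetrically via \eqref{leftfold} when $v_2=(2\beta+1)/4$). But that is the easy regime, and the decisive complementary case --- where both $v_1$ and $v_2$ are realized by the $m_1,m_2$ terms, so that neither side fold is strong --- is left as an unexecuted plan. It is genuinely not routine: if $k_1\le 1/2$, then \eqref{topfold} alone gives $(1+t)\lambda\ge b+5/24$, and with $1+t\le(10-3\sqrt{2})/4$ this yields only $\lambda\ge 5/\bigl(6(10-3\sqrt{2})\bigr)\approx 0.1448$, well short of $3/16=0.1875$ unless the lower bounds on $b$ from \eqref{eqn:abtl}, the slope constraints \eqref{eqn:st1}--\eqref{eqn:sb}, and the interaction with \eqref{rightfold}--\eqref{bottomfold} are all balanced simultaneously. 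You assert that dual certificates with value $3/16$ exist in every sub-case but never exhibit one, and that is precisely where the content of the proposition lies.

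There is also a structural obstacle to the strategy you borrow from Appendix~\ref{axialanalysis}. In the axial program only $t$ enters nonlinearly (through $(1+t)\lambda$), so fixing $t$ gives an honest LP and a one-parameter family of dual certificates suffices. The folding program is non-convex in many variables at once: \eqref{eqn:abtu} is quadratic in $\phi_B$, \eqref{eqn:st} and \eqref{eqn:sb} contain products such as $(1-2m_1)(1-k_1)$ and $\phi_B(2u+1)$, and $u$ multiplies $\alpha$, $\beta$, and the fold positions in \eqref{sideAreaFoldCon} and \eqref{rightfold}--\eqref{leftfold}. A dual-feasibility certificate would therefore have to be verified over a multidimensional parameter region, not an interval in $t$. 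This is exactly why the paper's proof of this proposition is computational rather than by hand: both versions of the program (the two tangent-direction cases at $C$) are passed to Gurobi's MIQCP solver, whose spatial branch-and-bound certifies $\lambda\ge 0.18803>3/16$. Your outline would become a proof only once the promised case analysis and certificates are actually produced and checked; as written, the hard cases are asserted rather than proved.
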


\begin{proof}
    We used Gurobi~\cite{Gurobi} to solve our \hyperref[foldprog]{program}. Although the program is non-convex, it can be written so that it consists only of linear and quadratic constraints, which makes it susceptible to solution using Gurobi's Mixed-Integer Quadratically Constrained Program (MIQCP) solver. As per the previous discussion, we have two different programs based on the slope of the tangent to $\partial K$ at $C$. In both cases, Gurobi's MIQCP solver is able to establish a lower bound of $0.18803$ on $\lambda$.

    To establish these lower bounds, Gurobi first translates the program into bilinear form and then uses a spatial branch and bound algorithm to find feasible dual solutions with high objective values. In our computations, we set the constraint violation tolerance to $10^{-9}$. Since the combined total number of variables and constraints in our program is bounded above by $100$, the resulting error in the lower bound is a few orders of magnitude smaller than $10^{-3}$; hence we get the desired result.
\end{proof}

\subsection{Upper Bound}

Lassak did not provide an upper bound for this problem, but it was claimed in \cite{MN} that a family of parallelograms can be constructed to show $\varphi(2) \leq 1/2$. Unfortunately, this paper contains a critical error, which we will explain in detail during the proof of the following theorem.

\begin{prop}
    \label{prop:foldpart2}
    There is a sequence of parallelograms for which the folding symmetry approaches $1/\phi$, where $\phi$ is the golden ratio. Therefore, $\varphi(2) \le 1/\phi \approx 0.61803$. For parallelograms, this value is tight, i.e., $\mathrm{Sym}_{\mathrm{fold}}(P) > 1/\phi$ for any parallelogram $P$.
\end{prop}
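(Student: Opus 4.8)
The plan is to treat the two assertions separately: first construct the degenerating family that forces $\varphi(2)\le 1/\phi$, and then prove the matching lower bound $\sym_{\fold}(P)>1/\phi$ valid for every parallelogram $P$.

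For the upper bound I would fix a one-parameter family of parallelograms $P_\ve$ that flattens as $\ve\to 0$ (so that no crease can be an exact axis of the limiting shape), normalize $\abs{P_\ve}=1$, and compute $\sym_{\fold}(P_\ve)$ directly from the definition. The essential difficulty is that the maximizing halfspace $\mathcal H$ ranges over a two-parameter family (the direction and the offset of the crease $\partial\mathcal H$), so I would first reduce this search. Using central symmetry of a parallelogram together with the admissibility requirement $\refl_{\partial\mathcal H}(P_\ve\cap\mathcal H)\subset P_\ve$, I expect to show that an optimal crease must carry an edge, or the image of an edge, of $P_\ve$ onto $\partial P_\ve$; this pins the crease direction to finitely many pencils, namely creases perpendicular to a side and creases perpendicular to a diagonal, each allowed a small shear so that the reflected long edge slides exactly along the opposite edge. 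For each pencil the captured area is an explicit piecewise-quadratic function of the single offset parameter, which I would maximize. The golden ratio should enter precisely here: for the extremal family the binding configuration is the one in which the reflected image of the far vertex lands exactly on an edge, imposing a relation of the shape $x^2=1-x$ on the captured fraction $x$, whose positive root is $1/\phi$. Letting $\ve\to 0$ then yields $\sym_{\fold}(P_\ve)\to 1/\phi$.

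The genuinely delicate point, and the apparent source of the error in \cite{MN}, is that one cannot stop after evaluating the ``obvious'' crease: a fold taken literally perpendicular to a side captures only about a quarter of the area (giving the value $1/2$), but allowing the crease a shear that realigns the parallelogram's centerline (its main diagonal) captures strictly more. Conversely, for the flattening family a crease that appears to fit the limiting shape perfectly actually pushes the extreme vertex a distance of order $\ve^2$ outside $P_\ve$, so the captured area drops by a definite amount in the limit. Thus $\sym_{\fold}$ is not lower semicontinuous at the limiting shape, which is exactly why the infimum $1/\phi$ is approached yet never attained and why every individual parallelogram obeys the strict inequality. I expect verifying that the balanced, diagonal-aligned crease is the true global optimum over all crease directions and offsets — so that no competing crease beats $1/\phi$ for the constructed family — to be the main obstacle; this is a finite but fussy case analysis that I would organize by the angle the crease makes with the sides.

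For the lower bound $\sym_{\fold}(P)>1/\phi$ I would run the reduction in reverse: given an arbitrary parallelogram, place one side horizontal, parametrize its shape by the shear $p$ and height $q$, and exhibit for every $(p,q)$ a single admissible crease capturing more than $\tfrac{1}{2\phi}\abs{P}$. The candidate creases are those isolated above (perpendicular to each side, and perpendicular to each diagonal with the centerline-aligning shear), and I would show their maximum always exceeds $1/\phi$, the worst case being where two of these candidate values coincide; solving that balance reproduces $x^2=1-x$ and the bound $1/\phi$, with strictness inherited from the non-attained limit. Because this step only requires producing one sufficiently good fold rather than the optimal one, it is more robust than the upper bound — provided the centerline-aligning shear is retained, since omitting it degrades the naive estimate back to $1/2$, the very trap that undercut \cite{MN}.
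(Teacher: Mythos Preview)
Your overall architecture—parametrize parallelograms, reduce the crease search to finitely many one-parameter families, optimize each, then balance—matches the paper's. But the specific geometry you describe is off, and in particular your diagnosis of the error in \cite{MN} is backwards; this matters because it determines which family to construct and which fold to watch.

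In the paper, a parallelogram with longest side $\overline{AB}$ of length $1$, height $h$, and horizontal offset $d_1$ has three candidate optimal creases, classified by which \emph{pair of sides} the crease meets: (1) $\overline{AB}$ and the short side $\overline{BD}$; (2) the two long sides $\overline{AB}$ and $\overline{CD}$; (3) $\overline{BD}$ and $\overline{CD}$. These are not ``perpendicular to a side or a diagonal with shear'': the Case~1 optimum is the crease through $A$ sending $B$ onto $\overline{CD}$, and the Case~3 optimum is the angle bisector at $D$. The resulting values of $\sym_{\fold}$ are
\[
\frac{1}{1-d_1+\sqrt{1-h^2}},\qquad 1-d_1,\qquad \sqrt{d_1^2+h^2},
\]
and one minimizes their maximum by sending $h\to 0$ and balancing the first two, giving $(1-d_1)(2-d_1)=1$; with $x=1-d_1$ this is indeed your $x^2=1-x$, but it arises from equating two crease types, not from a single vertex-on-edge incidence.

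The crucial correction about \cite{MN}: their error was \emph{omitting} Case~2—the perpendicular bisector of the long side—not failing to shear it. Their family is nearly rectangular ($d_1\to 0$), where Cases~1 and~3 both tend to $1/2$ but the omitted Case~2 tends to $1$. So the ``obvious'' perpendicular crease you propose to discard as giving only $1/2$ is precisely the fold that destroys their bound, and in the true extremal family ($d_1=2-\phi$, $h\to 0$) it is one of the two binding folds at $1/\phi$. Your plan as written would send you looking for shear-adjusted creases that are not the key objects; once you instead organize the three cases by side-incidence, the computation is short and the strict inequality for every genuine parallelogram follows immediately from $h>0$.
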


\begin{proof}
\begin{figure}[H]
    \centering
    \includegraphics[scale=0.55]{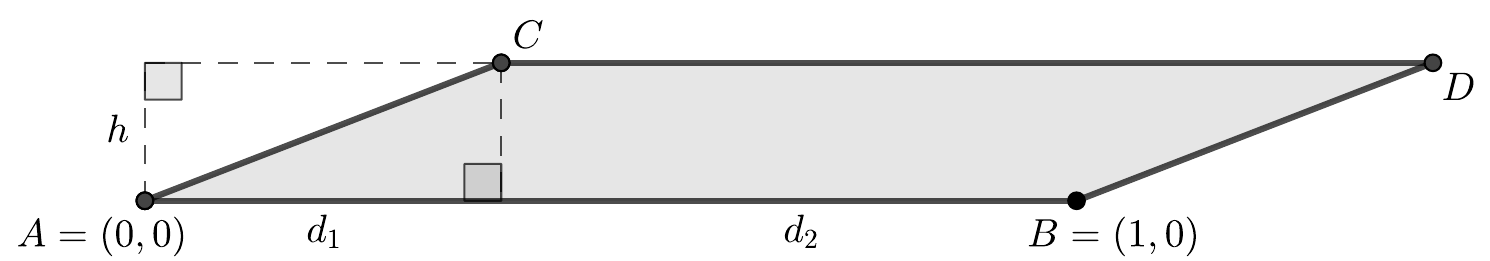}
    \caption{A parallelogram without large folds.}
    \label{fig:folding}
\end{figure}

We consider the possible folds in a parallelogram as labelled in Figure \ref{fig:folding}. For any parallelogram, orienting it such that the longest side is $AB$ will make it so that $d_2>0$. Now, there are three possible candidates for the best fold -- the folding line $\ell$ can cut through
\begin{enumerate}[label=(\arabic*),leftmargin=4\parindent]
    \item $\overline{AB}$ and $\overline{BD}$,
    \item $\overline{AB}$ and $\overline{CD}$, or
    \item $\overline{BD}$ and $\overline{CD}$.
\end{enumerate}

Notably, the second case is not a possible fold unless $d_2\geq d_1$, and this was the location of the error in \cite{MN}. The second case is simply omitted, and it is a very large fold in their final construction which is a family of parallelograms that becomes nearly rectangular. The other two cases are computed in their paper.

\vspace{5pt}
\noindent \textbf{Case 1.}
\begin{figure}[H]
    \centering
    \includegraphics[scale=0.55]{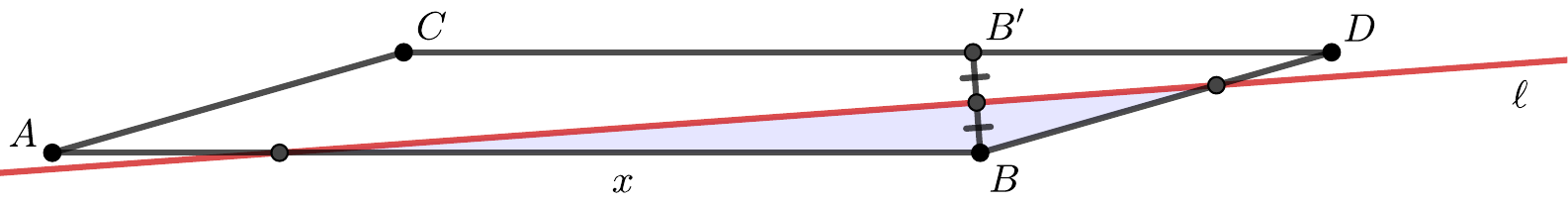}
    \caption{The fold in case 1 (the blue triangular region).}
\label{fig:foldingcase1}
\end{figure}

We consider a line $\ell$ which cuts $\overline{AB}$ at distance $x$ from $B$. Note that if the reflection $B'$ of $B$ in $\ell$ does not lie on $\overline{CD}$, the size of the fold can be increased by translating $\ell$ upwards until it does. It is shown in \cite{MN} that twice the area of the fold in this case (depicted in Figure~\ref{fig:foldingcase1}) is
\begin{equation*}
    2\cdot\brax{\text{Area of fold}} = \frac{hx^2}{x-d_1 + \sqrt{x^2-h^2}}\, .
\end{equation*}
Note that $d_1 < x \leq 1$, otherwise $\ell$ would pass above $A$ or $D$. Now a quick optimization reveals that the size of the fold is maximized at $x=1$. Thus the ratio of twice the area of the maximal fold in this case with the area of the parallelogram is 
\begin{equation*}
    \frac{1}{1-d_1+\sqrt{1-h^2}}\, .
\end{equation*}

\vspace{5pt}
\noindent \textbf{Case 2.} There is no freedom in choosing the angle of $\ell$, so the best fold is along the perpendicular bisector of $\overline{CD}$ (or $\overline{AB}$). This fold yields an area ratio of \begin{equation*}
    \frac{1}{h}\sqrax{\frac{h}{2} (2d_2-2d_1) + \frac{h}{2}(2d_1)} = 1-d_1\, .
\end{equation*}

\vspace{5pt}
\noindent \textbf{Case 3.} 
In this case, twice the area of the maximal fold is found to be
\begin{equation*}
    |\overline{BD}|^2\sin( \angle BDC) = |\overline{BD}|^2\frac{h}{\sqrt{d_1^2 + h^2}} = h\sqrt{d_1^2 + h^2} \, .
\end{equation*}

\vspace{5pt}
So in general, for any parallelogram $P$, we have
\begin{equation*}
    \sym_{\fold}(P) = \max\set{\frac{1}{1-d_1+\sqrt{1-h^2}}, \,\sqrt{d_1^2 + h^2}, 1-d_1}\, .
\end{equation*}
Observe that for a fixed $d_1$, taking $h=0$ minimizes each term, so to find the lowest possible symmetry we minimize $\max\set{\frac{1}{2-d_1}, d_1, 1-d_1}$. This occurs at $d_1=2-\phi$. A family of parallelograms with $d_1=2-\phi$ and $h$ descending to 0 therefore approaches a symmetry of $1/\phi$, and no parallelogram attains this value or anything lower.
\end{proof}

\subsection{Lower bound in the centrally symmetric case}

We had another idea to obtain larger folds, which works for convex sets that are centrally symmetric. We will point out later how this idea might be easily generalizable to all convex bodies. Define $\varphi_{cs}(n)$ to be the minimum folding symmetry, taken only over the centrally symmetric bodies.

\begin{prop}
\label{prop:foldpart3}
    $\varphi_{cs}(2)\geq 4/9$.
\end{prop}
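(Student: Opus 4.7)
The plan is to adapt the optimization-based approach of Proposition \ref{prop:foldpart1} to the centrally symmetric setting, using central symmetry both to tighten the constraints from the general folding program and to add one new fold construction.

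Let $K$ be a centrally symmetric convex body with center $O$. First, I would check that the inscribed axially regular hexagon $ABCDEF$ of Figure \ref{foldhex} can be chosen to be centrally symmetric about $O$, by exhibiting Lassak's hexagon as a fixed point of the central reflection action on the set of inscribed axially regular hexagons of maximum area. Adopting the coordinates of Figure \ref{foldhex}, this forces $u = 0$, $\alpha = -\beta$, $m_1 = -m_2$, $\phi_B = \phi_E$, $k_1 = k_2$, and the equalities $a = d$, $b = e$, $c = f$. The total excess is then $t = 2(a+b+c)$, and the axiality bound $t \leq (6-3\sqrt{2})/4$ from \eqref{tFoldCon} is preserved.

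Second -- this is the new ingredient mentioned in the text -- I would use central symmetry to enlarge the fold constructions. For any region $R \subset K$, its central reflection $-R$ is also in $K$. When folding from the right along a vertical line $\ell : x = v_1 w$, the image of $K \cap \{x > v_1 w\}$ lies in $K$ by the Section \ref{sec:fold} analysis; additionally, one can augment the fold region on the interior side of $\ell$ with a slice of $K$ whose central reflection pairs with a verified-contained region on the right, so that the reflected image still lies in $K$. The net effect is an additive improvement in \eqref{rightfold}, quantifiable in terms of $a$ and $c$, together with an analogous improvement in \eqref{topfold}.

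Third, substituting all of the above identifications into the folding program and adding the strengthened fold constraints, I would solve the resulting reduced MIQCP using Gurobi, exactly as in the proof of Proposition \ref{prop:foldpart1}. The expected outcome is the bound $\lambda \geq 2/9$, which gives $\sym_\fold(K) = 2\lambda \geq 4/9$.

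The main obstacle will be making the new centrally symmetric fold geometrically rigorous: one has to verify that the enlarged fold region really lies in $K \cap \mathcal{H}$ and that its reflection across $\ell$ is still in $K$, which likely requires a case analysis on which of the three expressions maximizes $v_1$ (and symmetrically $v_2$), and on whether the supporting lines of $K$ at $C$ and $F$ make acute or obtuse angles with $\overline{DE}$, in the same spirit as the case split appearing at the end of Section \ref{sec:fold}.
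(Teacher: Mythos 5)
Your proposal is a research plan rather than a proof, and it differs fundamentally from the paper's argument. The paper does not revisit the hexagon program at all for the centrally symmetric case: it inscribes a rectangle of area exactly $4/9$ (Lemma~\ref{lem:cs-ins-rect}, which is where central symmetry is actually consumed), shows via Lemma~\ref{lem:cs-large-cap} that some cap then has area at least $\tfrac14(\sqrt{1-2r}+1-r)=2/9$, and folds that cap --- either directly over its base side of the rectangle, or, if the cap is too tall, over a parallel chord chosen so that the truncated cap still has area $\ge 2/9$. This gives $\mathrm{Sym}_{\mathrm{fold}}(K)\ge 4/9$ by an explicit geometric construction with no optimization software involved.

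Your route has three genuine gaps. First, the claim that Lassak's axially regular hexagon can be chosen centrally symmetric about $O$ is not established: the set of maximum-area inscribed axially regular hexagons is not convex, so there is no obvious fixed point of the central reflection action, and averaging two such hexagons need not produce an inscribed axially regular hexagon. Second, the ``new fold construction'' is not a construction: you do not specify which slice on the interior side of $\ell$ is added, why its reflection across $\ell$ lands inside $K$ (central symmetry guarantees $-R\subset K$, not $\refl_\ell(R)\subset K$, and these differ by a translation along $\ell$ plus a reflection in $\ell^\perp$), or what quantitative strengthening of \eqref{rightfold} results. Third, and most importantly, the conclusion $\lambda\ge 2/9$ is asserted as ``expected'' with no evidence; the general program of Proposition~\ref{prop:foldpart1} certifies only $\lambda\ge 0.18803$, and there is no reason given that the symmetric identifications $u=0$, $a=d$, $b=e$, $c=f$ plus an unspecified extra constraint push the optimum up to $2/9\approx 0.2222$. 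Until the strengthened program is written down precisely and solved, no bound has been proved.
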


Our proof idea is to consider appropriate folds after obtaining a large inscribed rectangle in the convex body. Radziszewski~\cite{Rad} showed that any convex body has an inscribed rectangle of area at least $1/2$, but as we shall later see, it would be more helpful if we could find smaller inscribed rectangles. In particular, we need the following lemma. If a parallelogram $P$ is inscribed in a convex body $K$, then $\text{int}(K) \setminus P$ consists of $4$ connected components whose closures we call \emph{caps}. 

\begin{lemma}
\label{lem:cs-ins-rect}
   Any centrally symmetric convex body $K$ of area $1$ has an inscribed rectangle of area $r$ for any $r\leq 1/2$. 
\end{lemma}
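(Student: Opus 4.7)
I plan to prove the lemma by a continuous-deformation argument combined with the intermediate value theorem. Radziszewski's theorem gives an inscribed rectangle of area at least $1/2$; the task is to shrink it continuously through inscribed rectangles of arbitrarily small positive area, so that every value in $(0,1/2]$ is attained along the deformation. Place the center of symmetry at the origin and let $r\colon S^1 \to (0,\infty)$, $r(\theta+\pi)=r(\theta)$, denote the radial function of $K$. Inscribed rectangles centered at the origin correspond bijectively to unordered pairs of antipodal pairs $\{\pm A, \pm B\}$ with $A,B \in \partial K$, $|A|=|B|$, and $B \neq \pm A$; the area of such a rectangle is $2|A|^2 \sin \phi$ where $\phi = \angle(OA, OB) \in (0,\pi)$. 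The natural parameter space is therefore the zero set
\[
Z := \{(\theta, \phi) \in S^1 \times (0,\pi) : r(\theta) = r(\theta+\phi)\},
\]
with continuous area function $\mathcal{A}(\theta,\phi) = 2r(\theta)^2 \sin \phi$.

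\textbf{Key steps.} First, by Radziszewski's theorem combined with a central-symmetry argument (e.g., Minkowski-averaging the rectangle with its reflection through the origin and rescaling to make it inscribed), I will produce an inscribed rectangle centered at the origin of area at least $1/2$, with coordinates $(\theta^*, \phi^*) \in Z$. Second, the identity $\int_{S^1} [r(\theta)-r(\theta+\phi)]\,d\theta = 0$ (valid because $r$ is $\pi$-periodic) forces $g_\phi(\theta) := r(\theta) - r(\theta+\phi)$ to have at least one sign change on $S^1$ for every $\phi \in (0,\pi)$, so $Z$ surjects onto $(0,\pi)$. Third, I will construct a continuous path in the closure $\overline{Z} \subset S^1 \times [0,\pi]$ from $(\theta^*, \phi^*)$ down to a point with $\phi = 0$, where the rectangle degenerates. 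Along such a path $\mathcal{A}$ is continuous and decreases from $\geq 1/2$ to $0$, so the intermediate value theorem produces inscribed rectangles of every area in $(0, 1/2]$.

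\textbf{Main obstacle.} The crux is the continuous-path step, since $Z$ can have intricate topology---tangential zeros and bifurcations as $\phi$ varies---in general. My approach is to first handle the case where $K$ is smooth, strictly convex, and centrally symmetric, with radial function having only nondegenerate critical points: in that setting $g(\theta,\phi)$ is Morse-like, $Z$ is a smooth compact $1$-manifold in $S^1 \times [0,\pi]$ whose boundary is contained in $S^1 \times \{0,\pi\}$, and each connected component is an arc joining two boundary points. The component through $(\theta^*, \phi^*)$ is then the desired path. To pass to an arbitrary centrally symmetric body, I will approximate $K$ by smooth strictly convex centrally symmetric bodies $K_n \to K$ in the Hausdorff metric (renormalized to unit area), apply the smooth case to each $K_n$ to obtain inscribed rectangles of area $r$, and extract a subsequential Hausdorff limit to produce an inscribed rectangle of area $r$ in $K$, using continuity of the area functional and of the inscription condition under Hausdorff convergence of convex bodies.
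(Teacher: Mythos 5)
Your parametrization of origin-centered inscribed rectangles by the zero set $Z$ of $g(\theta,\phi)=r(\theta)-r(\theta+\phi)$ is sound, as is the observation that $Z$ meets every level $\phi\in(0,\pi)$, but there are two genuine gaps. First, the starting point. Averaging Radziszewski's rectangle $R$ (center $c$) with $-R$ gives $\tfrac12(R+(-R))=R-c$, an origin-centered rectangle of area at least $1/2$ that is \emph{contained in} $K=\tfrac12(K+(-K))$ but not inscribed; dilating it until it first touches $\partial K$ generically puts only one antipodal pair of vertices on the boundary, so you do not obtain a point of $Z$ with $\mathcal{A}\ge 1/2$. Producing an origin-centered rectangle with all four vertices on $\partial K$ and area at least $1/2$ is essentially the lemma at its top value, so as stated this step is close to circular. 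Second, connectivity. Even for smooth strictly convex $K$ with Morse radial function, the components of $Z$ need not all be arcs reaching $\phi\in\{0,\pi\}$: for each fixed $\phi$ the signed count of zeros of $g_\phi$ is zero, so zeros can be created and annihilated in pairs as $\phi$ varies, which is exactly the mechanism producing closed-loop components in the open cylinder. Nothing in your argument excludes the possibility that the component through $(\theta^*,\phi^*)$ is such a loop on which $\mathcal{A}$ never drops to the target value. (There is also a degeneracy at the ends: $g$ vanishes identically on $\phi=0$ and, by central symmetry, on $\phi=\pi$, so arcs do not meet the boundary transversally; they limit onto critical points of $r$. This is repairable by an implicit-function-theorem analysis but is not automatic.) Your final approximation step is fine and matches the paper's own limiting argument.

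For contrast, the paper avoids both issues by fixing the target area $r$ rather than the rectangle condition: for each $p\in\partial K$ it rotates the chord through $p$ away from the supporting direction, shows the resulting origin-centered inscribed parallelogram sweeps continuously through all areas from $0$ up to at least $1/2$ (a cap-area count), and defines $P_p$ as the first such parallelogram of area exactly $r$. It then shows some $P_p$ must be a rectangle: otherwise the map sending $p$ to the next vertex of $P_p$ would increase the distance to the center by a uniform $\delta>0$ at every iteration, contradicting boundedness of $K$. If you wish to keep your route, the missing ingredients are precisely (i) an inscribed centered rectangle of area at least $1/2$ to serve as the endpoint of the path, and (ii) a proof that the relevant component of $Z$ reaches the degenerate locus; neither is supplied by the present sketch.
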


\begin{proof}
   First, suppose $K$ is strictly convex. Translate so that the origin is the centroid of $K$. For a given point $p \in \partial K$, we create an inscribed parallelogram having area $r$ with one vertex at $p$ as follows. Begin with the unique supporting line $\ell$ at $p$, and rotate it clockwise, causing it to form a chord in $K$. The length of this chord is unimodal, with a maximum at some rotation $\theta$. For any angle between $0$ and $\theta$, there is a unique parallel second chord of equal length, and the four end points of these chords form a parallelogram. 
   
   The area of the parallelogram formed as a function of the rotation over $[0,\theta]$ is continuous, and we claim it has a maximum value of at least $1/2$. To see this, label the cap areas counterclockwise $a$, $b$, $c$, and $d$, where $a$ is the cap bordered by $\ell$. Initially, $a+c=0$ and $b+d=1$, and this is reversed once we rotate $\ell$ to angle $\theta$. Thus, we can rotate until $a+c = 1/4$, and let $r$ be the area of the parallelogram in this case. Through an area preserving affine transformation, we can map the parallelogram to a square with center at the origin. A computation reveals that the amount of area in the other two caps is at most $1/4$, hence the parallelogram has area at least $1/2$. Said computation is almost identical to the one that proves Lemma \ref{lem:cs-large-cap} below, so we omit the details. We remark that one could also use the feature of central symmetry to just apply Lemma \ref{lem:cs-large-cap} towards an easy contradiction, but central symmetry is not necessary for the claim to hold.
   
   This means we can rotate until we first reach a parallelogram of area $r$, which we denote by $P_p$. This process applied to every point on the boundary creates a continuous family of parallelograms. Now, define the functions
   \begin{align*}
       \tau: \partial K \to \partial K,&\ \ p \mapsto \text{ the next vertex after $p$ on $P_p$ clockwise along $\partial K$},\\
       \rho: \partial K \to \mathbb{R},&\ \ p \mapsto \text{the distance of $p$ from the origin, and}\\
       \kappa: \partial K \to \mathbb{R},&\ \ p \mapsto \rho(\tau(p)) - \rho(p).
   \end{align*}
    Note that the above functions are all continuous. The function $\kappa$ effectively measures the `skew' of $P_p$, namely, $\kappa(p) = 0$ if and only if $P_p$ is a rectangle. Therefore, we assume for contradiction that $\kappa(p)$ is never $0$. The function $\kappa$ is continuous, so without loss of generality we can assume it is always positive.
    
    Since $\partial K$ is compact, $\kappa$ attains some minimum value $\delta > 0$ and the function $\rho$ is uniformly continuous. Let $p_0$ be any point on $\partial K$, and let $p_n = \tau(p_{n-1})$ for $n = 1, 2, \dots$. This creates a countable sequence of points on $\partial K$, so we can find two elements arbitrarily close together. In particular, there are distinct elements $p_i$ and $p_j$, such that $\abs{\rho(p_i) - \rho(p_j)} \leq \delta /2$. This is a contradiction, since $\abs{\rho(p_i) - \rho(p_j)} \geq \abs{i-j} \delta > \delta / 2$.

    Extending to bodies that are not strictly convex is a standard limiting argument. For convenience, we replicate the argument found in Section~5 of \cite{Las2}. We can approximate any convex body with a sequence of strictly convex sets $K_i \subseteq K$ converging pointwise to $K$. In each of these, we can inscribe a rectangle $R_i$ of area $r$ from our previous work. By compactness, we can make four selections of finer subsequences to make each of the four vertices converge to a point of $\partial K$.
\end{proof}

Lemma~\ref{lem:cs-ins-rect} is the only gap in proving Theorem \ref{prop:foldpart3} in complete generality. We conjecture that the lemma should be true for all convex bodies, but our method of proof does not seem to generalize. 

Now, if we inscribe a rectangle of area $1/3$ for example, we can immediately find a cap of size at least $1/6$, and this is enough to obtain a fold of size $1/6$. However, using the following lemma which holds for general convex sets, we can improve this further.

\begin{lemma}
\label{lem:cs-large-cap}
    Let $K$ be a convex body of area 1, with an inscribed rectangle $R$ of area $r \le 1/2$. There is a cap of area at least $ \frac{1}{4}(\sqrt{1 - 2 r} + 1 - r)$.
\end{lemma}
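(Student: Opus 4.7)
Place coordinates so that $R=[-a,a]\times[-b,b]$ with $4ab=r$, and label the corners $A=(a,b)$, $B=(-a,b)$, $C=(-a,-b)$, $D=(a,-b)$. Let $c_T,c_B,c_L,c_R$ denote the areas of the four caps, summing to $1-r$. The plan is to use convexity of $K$ to derive the product inequality $c_T c_B c_L c_R \le r^4/256$, and then combine it with the sum constraint through a short optimization to force one cap to have area at least $\tfrac{1}{4}(\sqrt{1-2r}+1-r)$.

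At each corner $X$ I will fix a supporting line $\ell_X$ of $K$ through $X$ and write $\sigma_X \ge 0$ for the absolute value of its slope in these coordinates. Since $K$ lies on the lower side of $\ell_A$ and $\ell_B$, the cap $K_T$ is contained in the triangle cut off above $\overline{AB}$ by $\ell_A$ and $\ell_B$, whose area works out to $2a^2\sigma_A\sigma_B/(\sigma_A+\sigma_B)$. Analogously I obtain $c_B \le 2a^2\sigma_C\sigma_D/(\sigma_C+\sigma_D)$, $c_L \le 2b^2/(\sigma_B+\sigma_C)$, and $c_R \le 2b^2/(\sigma_A+\sigma_D)$. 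Multiplying the four inequalities and applying $(\sigma_X+\sigma_Y)(\sigma_Z+\sigma_W) \ge 4\sqrt{\sigma_X\sigma_Y\sigma_Z\sigma_W}$ twice, with the two natural pairings of the four factors appearing in the denominator, all $\sigma$-dependence cancels and yields $c_T c_B c_L c_R \le (ab)^4 = r^4/256$.

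It then remains to minimise $\max(c_T,c_B,c_L,c_R)$ subject to $\sum c_i = 1-r$ and $\prod c_i \le r^4/256$. The minimum is attained at a ``$2$-$2$'' split $c_T=c_B=M$, $c_L=c_R=m$ (up to relabelling) with $M+m=(1-r)/2$ and $Mm=r^2/16$, giving $M = \tfrac{1}{4}((1-r)+\sqrt{1-2r})$, the claimed bound. The step I expect to be the main obstacle is showing that this $2$-$2$ configuration actually achieves the minimum: a Lagrange multiplier computation forces smooth interior critical points to have three of the $c_i$ equal (the ``$1$-$3$'' case), so one has to check directly that every such $1$-$3$ configuration yields a strictly larger $\max c_i$ than the boundary $2$-$2$ optimum. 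Degenerate cases in which a side of $R$ coincides with $\partial K$ (so that $\sigma_X=0$ at two adjacent corners, forcing the corresponding cap area to vanish and the product bound to become trivial) need to be treated separately using the surviving tangent-triangle bounds; these cases also reduce to a $2$-$2$ type balance and produce the same value.
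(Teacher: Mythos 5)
The product inequality you derive is correct: with supporting lines of absolute slopes $\sigma_A,\dots,\sigma_D$ at the corners, each cap is trapped in the corresponding tangent triangle, and two applications of AM--GM to the pairings $(\sigma_A+\sigma_B)(\sigma_C+\sigma_D)$ and $(\sigma_B+\sigma_C)(\sigma_A+\sigma_D)$ do cancel all slope dependence and give $c_Tc_Bc_Lc_R\le (ab)^4=r^4/256$. The gap is in the final step: the relaxed optimization ``minimize $\max c_i$ subject to $\sum c_i=1-r$ and $\prod c_i\le r^4/256$'' does \emph{not} have optimal value $\tfrac14(\sqrt{1-2r}+1-r)$. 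Since the product constraint is only an upper bound, it is satisfied for free by letting one cap vanish: the point $c_1=0$, $c_2=c_3=c_4=(1-r)/3$ is feasible, and $(1-r)/3<\tfrac14(\sqrt{1-2r}+1-r)$ exactly when $r<6\sqrt2-8\approx 0.485$ --- in particular for $r=4/9$, the value needed in the proof of Proposition~\ref{prop:foldpart3}. Your own flagged ``main obstacle'' is where this surfaces: the $1$-$3$ configurations with the product constraint active (e.g.\ for $r=4/9$: $s+3m=5/9$, $sm^3=1/6561$, giving $m\approx 0.176<2/9$) yield a \emph{strictly smaller} max than the $2$-$2$ optimum, not a larger one. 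So the two retained constraints are genuinely too weak; the geometric information that prevents one cap from being small without forcing a neighbour to be large has been discarded, and no rearrangement of the sum-plus-product data can recover the claimed bound.

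To repair this you would need to keep slope-dependent constraints coupling adjacent caps (for instance, a sharp bound on $c_Tc_R$ alone; the crude estimate $c_Tc_R\le r^2/4$ obtainable from your triangles is again too weak to exclude the degenerate configurations). This is essentially what the paper's argument does by a different route: it fixes the larger opposite pair $a+c\ge\tfrac12(1-r)$, replaces all caps by triangles, and shows by moving the two apex points that the configuration maximizing the \emph{other} pair is a centrally symmetric rhombus; the bound then follows from a direct computation on one quarter of that rhombus (a triangle with an inscribed rectangle). The extremal rhombus is exactly the configuration where your product inequality is tight, so your bound is consistent with the truth --- it just cannot, on its own, certify it.
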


\begin{proof}
    There are two opposite caps with areas $a$ and $c$ whose summed area is at least $\frac{1}{2}(1-r)$. Suppose $a\geq c$, and denote the areas of the smaller pair of caps by $b$ and $d$. To start, we instead scale the picture so that the area of the rectangle is $4$, and translate so that the centroid of the rectangle is the origin. We will modify $K$, and attempt to put as much area into the other two caps as possible while maintaining the area of $a+c$ and keeping $R$ fixed.

    \begin{figure}[H]
        \centering
        \includegraphics[scale = 0.55]{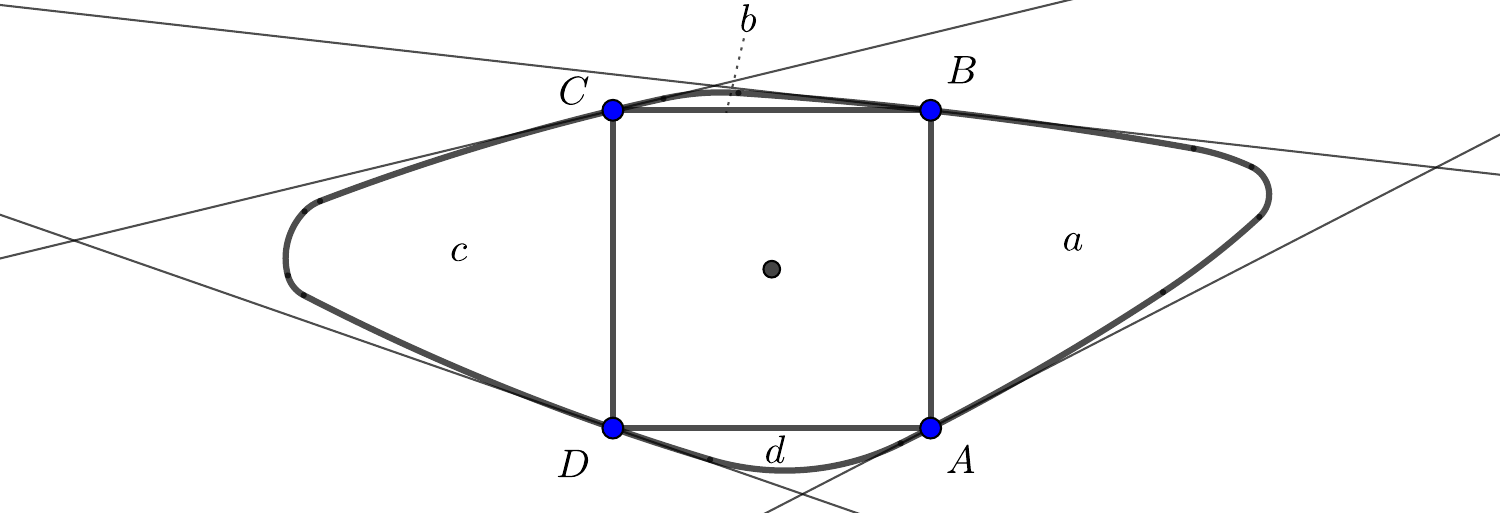}
        \caption{The caps of a convex body with an inscribed rectangle.}
        \label{fig:fold-lower}
    \end{figure}
    
    Let the vertices of the rectangle be $A$, $B$, $C$, and $D$ as shown in Figure~\ref{fig:fold-lower}, where the side cap cut off by $AB$ has area $a$. First, observe that every cap should be made triangular. Indeed, if the smaller caps are not triangular, there is additional area that may freely be added. If the area $a$ cap is not triangular, we can `squish' the cap, by turning supporting lines at $A$, $B$ inward onto the cap until their crossing, along with $A$ and $B$, forms a triangle with area $a$, and replace the cap with this triangle. This creates additional room for the small caps to occupy.    

    Next, let $P$ and $Q$ be the extra vertices of the large triangular caps with areas $a$ and $c$, respectively. These two points now control the areas of all the caps, in the sense that the lines $PA$, $PB$, $QC$, and $QD$ border all of them. Now, by translating $Q$ up and down, we keep $a$ and $c$ constant while changing the areas of the small caps $b$ and $d$. A routine computation of the areas as a function of $Q_y$ shows that $b+d$ is maximized when $Q_y = P_y$. At this stage, we can translate $P$ and $Q$ together, which does not change $a+c$ and $b+d$. Therefore, to minimize the largest cap, we will set $P_y = Q_y = 0$ and $P_x = -Q_x$. 

    Finally, we have reached a centrally symmetric rhombus whose largest cap is simple to compute. For the total area, we quarter the diagram, resulting in a triangle with an inscribed rectangle of area $1$, as shown in Figure~\ref{fig:triangle}. 

    \begin{figure}[H]
        \centering
        \includegraphics[scale = 0.55]{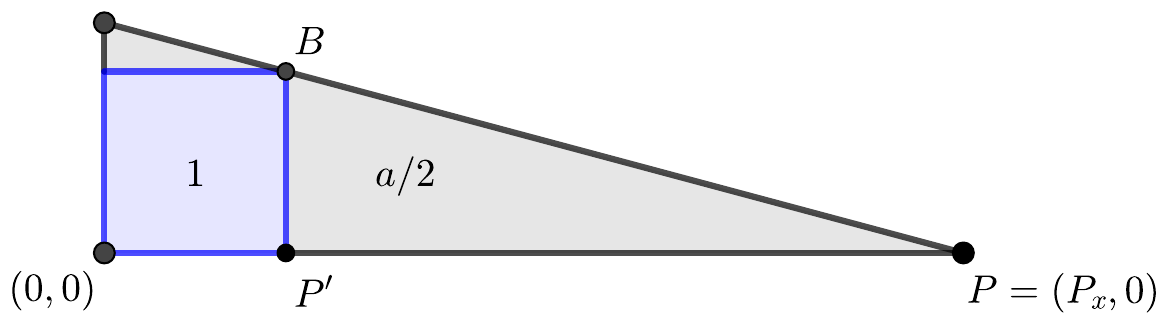}
        \caption{The triangle used to compute the largest cap area.}
    \label{fig:triangle}
    \end{figure}
    
    The ratio between the areas of the rectangle and the triangle is $r' \leq r$, and we compute the area of the cap-portion $\triangle BP'P$ of the triangle to be 
    \begin{equation*}
        \frac{a}{2} = \frac{1}{2r'}(\sqrt{1 - 2 r'} + 1-r') \ge \frac{1}{2r}(\sqrt{1 - 2 r} + 1-r)\, .
    \end{equation*}
    Returning to the original scale yields the desired number. 
\end{proof}

\begin{proof}[Proof of Proposition~\ref{prop:foldpart3}]
    Let $K$ be a centrally symmetric plane convex body with unit area. Find an inscribed rectangle $R$ in $K$ with area $4/9$. Let the vertices of the rectangle be $A$, $B$, $C$, and $D$ as shown in Figure~\ref{fig:fold-lower}. Let $a$, $b$, $c$, and $d$ denote the areas of the caps with bases $\overline{AB}$, $\overline{BC}$, $\overline{CD}$, and $\overline{DA}$, respectively. Without loss of the generality, we may assume that $a$ is the largest of these areas. By Lemma \ref{lem:cs-large-cap},
    \begin{equation*}
        a \ge \frac{1}{4} \brax{\sqrt{1 - 2\brax{\frac{4}{9}}} + 1-\brax{\frac{4}{9}}} = \frac{2}{9}\, .
    \end{equation*}
    Let $S$ be the cap with base $\overline{AB}$ and $P$ be a point in $S$ that is farthest from $AB$. Let $Q$ be the foot of the perpendicular from $P$ to $AB$. If $|\overline{PQ}| \le |\overline{BC}|$, then the cap $S$ can be folded along the line $AB$ to stay fully inside $K$, in which case
    \begin{equation*}
        \text{Sym}_{\text{fold}}(K) \ge 2 \times \text{area}(S) = 2a \ge 4/9\, .
    \end{equation*}
    Thus, we may henceforth assume $|\overline{PQ}| > |\overline{BC}|$. Let $\alpha > 0$ be such that $|\overline{PQ}| = (1+2\alpha) |\overline{BC}|$. Let $\ell$ be the line cutting through $S$ and parallel to $AB$ at a perpendicular distance $(1+\alpha)|\overline{BC}|$ from $P$. Let $S'$ be the smaller cap obtained by the intersection of $S$ with the halfspace defined by $\ell$ containing $P$. It is evident that $S'$ can be folded along $\ell$ to stay fully inside $K$. Let $X$ and $Y$ be the intersection points of $\ell$ with the lines $BC$ and $DA$, respectively. Then we have
    \begin{align*}
        \text{area}(S') &\ge \text{area of triangle } APB - \text{area of rectangle } ABXY, \\
        &= \frac{1}{2} \cdot |\overline{AB}| \cdot (1+2\alpha)|\overline{BC}| - |\overline{AB}| \cdot \alpha|\overline{BC}|\, , \\
        &= \frac{1}{2} |\overline{AB}| |\overline{BC}| = \frac{2}{9}\, .
    \end{align*}
    This implies that $\text{Sym}_{\text{fold}}(K) \ge 4/9$.
\end{proof}

We remark that $4/9$ is the optimal choice for the area of the inscribed rectangle in the above proof. This can be verified by taking an inscribed rectangle of area $r$ and finding the $r$ for which the resulting expression for the lower bound on $\text{Sym}_{\text{fold}}(K)$ is maximized.

Proposition~\ref{prop:foldpart3} together with Propositions~\ref{prop:foldpart1} and \ref{prop:foldpart2} yields Theorem~\ref{thm:fold}.

\section{Bounds in higher dimensions}
\label{sec:high}

In dimensions $3$ and above, we know of no existing upper bounds on $\sigma(n,k)$ for $k>0$ in the literature. In this section we obtain a bound on $\sigma(n+1,k+1)$ in terms of $\sigma(n,k)$, which in particular yields the best known upper bounds on $\sigma(n,n-1)$ for all $n \geq 3$. We need the following lemma, which helps bound the symmetry of bodies that are very near other bodies in volume.

\begin{lemma}
\label{lem:wiggle}
    Let $K \subset \R^n$ be a convex body, and let $k \in \set{0, \dots, n-1}$. Define two other convex bodies, $K_1 \subseteq K \subseteq K_2$ such that $\vol_n(K_1) \ge (1-\ve) \vol_n(K_2)$ for a positive constant $\ve < 1$, and $\sym_k(K_1) = \sym_k(K_2)$. Then $|\sym_k(K)-\sym_k(K_2)| \leq \ve$.
\end{lemma}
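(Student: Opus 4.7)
The plan is to bound $\sym_k(K)$ from above and below separately, exploiting the containments $K_1 \subseteq K \subseteq K_2$ combined with the volume hypothesis. Write $s := \sym_k(K_1) = \sym_k(K_2)$; the goal is to show $|\sym_k(K) - s| \le \ve$.

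For the lower bound, I would fix a $k$-dimensional flat $\mathcal{L}_1$ attaining the maximum for $K_1$, so that $\vol_n(K_1 \cap \refl_{\mathcal{L}_1}(K_1)) = s\,\vol_n(K_1)$. Since $K_1 \subseteq K$ and reflection respects inclusion, $K_1 \cap \refl_{\mathcal{L}_1}(K_1) \subseteq K \cap \refl_{\mathcal{L}_1}(K)$. Dividing by $\vol_n(K) \le \vol_n(K_2)$ and using $\vol_n(K_1) \ge (1-\ve)\vol_n(K_2)$ yields $\sym_k(K) \ge s(1-\ve)$, hence $s - \sym_k(K) \le s\ve \le \ve$.

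For the upper bound I would pick a flat $\mathcal{L}^{*}$ optimal for $K$. The containment $K \subseteq K_2$ gives $\sym_k(K)\vol_n(K) \le \vol_n(K_2 \cap \refl_{\mathcal{L}^*}(K_2)) \le s\,\vol_n(K_2)$, so $\sym_k(K) \le s\,\vol_n(K_2)/\vol_n(K)$. By itself this only yields $\sym_k(K) - s \le s\ve/(1-\ve)$, which can exceed $\ve$. The additional ingredient needed is the trivial bound $\sym_k(K) \le 1$. A short case split around whether $s \le \vol_n(K)/\vol_n(K_2)$ then closes the argument: in the first case $\sym_k(K) - s \le s(\vol_n(K_2)-\vol_n(K))/\vol_n(K) \le (\vol_n(K_2)-\vol_n(K))/\vol_n(K_2) \le \ve$, using $s \le \vol_n(K)/\vol_n(K_2)$ and $\vol_n(K) \ge (1-\ve)\vol_n(K_2)$; in the second case $s > \vol_n(K)/\vol_n(K_2) \ge 1-\ve$, so $\sym_k(K) - s \le 1 - s < \ve$.

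The main obstacle is the upper bound: the estimate from $K \subseteq K_2$ alone narrowly misses $\ve$, and recognizing that combining it with the universal bound $\sym_k(K) \le 1$ via a case split rescues the cleaner statement is the crux of the argument.
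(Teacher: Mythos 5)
Your proof is correct and follows essentially the same route as the paper: both directions come from choosing the optimal flat for $K_1$ (resp.\ $K$) and transferring it through the containments, together with the volume hypothesis and the trivial bound $\sym_k \le 1$. The only difference is cosmetic: your case split for the upper bound can be avoided by rearranging $\sym_k(K)\,\vol_n(K) \le \sym_k(K_2)\,\vol_n(K_2)$ into $\sym_k(K) - \sym_k(K_2) \le \ve\,\sym_k(K) \le \ve$, which is how the paper concludes in one line.
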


\begin{proof}
Let $\mathcal{L}$ be the maximal $k$-flat so that $\vol_n\brax{K \cap \refl_\mathcal{L}(K)} = \vol_n(K) \sym_k(K)$. $K \cap \refl_\mathcal{L}(K)$ is contained in $K_2 \cap \refl_\mathcal{L}(K_2)$, so 
\begin{equation*}
    \sym_k(K_2) \ge \frac{1}{\vol_n(K_2)} \vol_n(K) \sym_k(K) \ge (1-\ve) \sym_k(K)\, .
\end{equation*}
On the other hand, let $\mathcal{L}'$ be the $k$-flat so that $\vol_n \brax{K_1 \cap \refl_{\mathcal{L}'}(K_1)} = \vol_n(K_1) \sym_k(K_1)$. $K_1 \cap \refl_{\mathcal{L}'}(K_1)$ is contained in $K \cap \refl_{\mathcal{L}'}(K)$, so  
\begin{equation*}
    \sym_k(K) \ge \frac{1}{\vol_n(K)} \vol_n(K_1)\sym_k(K_1) \ge (1-\ve) \sym_k(K_1) = (1-\ve) \sym_k(K_2)\, ,
\end{equation*}
and the claim follows.
\end{proof}

In the following, we use $\circc(K)$ to denote the diameter of the smallest closed sphere fully containing a bounded set $K$.

\begin{proof}[Proof of Theorem~\ref{thm:higherdimensions}]
Let $\ve>0$, then take a body $K_{n}\in \R^n$ with $k$-symmetry $\le \sigma(n,k) + \ve$ and re-scale so that $\vol_n(K_n) = n+1$. Also, translate so the centroid of $K_n$ is the origin. Create a pyramid $K_{n+1}$ in $\R^{n+1}$ by placing a point $p$ orthogonally above the centroid of $K_n$ at a suitable distance $D$ which will be specified later (but for now, assume it is larger than $2 \cdot (n+1) \cdot \circc(K_n)$). Denote by $\ell_D$ the line joining $p$ to the centroid of $K_n$ as shown in figure \ref{fig:pyramid}. Observe that the volume of this new shape is $\vol_{n+1}(K_{n+1})= \frac{D}{n+1}\vol_n(K_n) = D$.

\begin{figure}[H]
    \centering
    \includegraphics[scale=0.6]{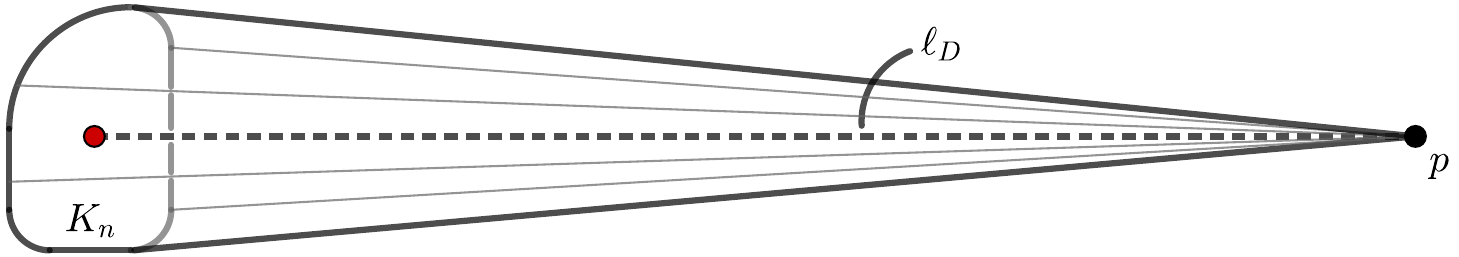}
    \caption{The pyramid construction, $K_{n+1}$.}
    \label{fig:pyramid}
\end{figure}

We now define $\theta_D$ to be the small angle in the right triangle with non-hypotenuse side lengths $D$ and $2(n+1) \circc(K_n)$. 
Given a $k+1$ dimensional subspace $\mathcal{L}\subseteq\R^{n+1}$, there are three cases for the angle between $\mathcal{L}$ and $\ell_D$:
\begin{enumerate}[label=(\arabic*)]
    \item $\mathcal{L}$ is within $\theta_D$ of $\ell_D$,
    \item $\mathcal{L}$ is within $\theta_D$ of the orthogonal complement of $\ell_D$, or
    \item neither of the above hold.
\end{enumerate}

\vspace{5pt}
\noindent \textbf{Case 1.}
We will attain a bound on the symmetry realized by such a subspace slightly above $\sym_{n-1}(K_n)$. Fix a hyperplane $\mathcal{L}$ sufficiently close in angle to $\ell_D$. Then the projection $\ell$ of $\ell_D$ onto this subspace is at an angle of $\leq \theta_D$ with $\ell_D$. Parameterize hyperplanes $H_t$ orthogonal to $\ell$, where $H_0$ passes through $p$, and as $t$ increases, we move in the direction towards the base of the pyramid. We will let $\tau = \sup_{t>0} \brax{H_t \cap K_{n+1} \neq \emptyset}$. Finally, we treat $K_t := H_t \cap K_{n+1}$ as a subset of $\R^n$ with the origin at the intersection of $\ell_D$ with $K_t$ and let $f(t) = \vol_n(K_t) \sym_{n-1}(K_t)$. We now estimate that
\begin{equation}
    \label{eq:integ1}
    \brax{k_{n+1} \cap \refl_{\mathcal{L}}(K_{n+1})} \le \int_0^\tau f(t) dt\, ,
\end{equation}
as we suppose that our chosen subspace happened to reflect every slice optimally. Now, increase $D$ until $\theta_D$ becomes small enough so there are parameters $t_1\leq t\leq t_2$ such that $\frac{t_1}{D}\cdot K_n \subseteq K_t \subseteq \frac{t_2}{D} \cdot K_n$ with $t_1$ sufficiently close to $t_2$ to apply Lemma \ref{lem:wiggle} using $\ve$. In particular, one can compute that suitable slices parallel to the base can be chosen at a distance at most $D \sec(\theta_D) - D + 2\tan(\theta_D)$ apart along $\ell_D$, and by plugging $\theta_D$ into this expression we see it goes to zero as $D$ grows. As a by-product, we also have that $\vol_n(K_t) \le \vol_n \brax{\frac{t}{D} K_n} (1+\ve)$. Recall that the volume of $\frac{t}{D} K_n$ is $\brax{\frac{t}{D}}^n \vol_n(K_n)$. We show this configuration of slices in Figure \ref{fig:slices}.

\begin{figure}[H]
\centering
\includegraphics[scale=0.6]{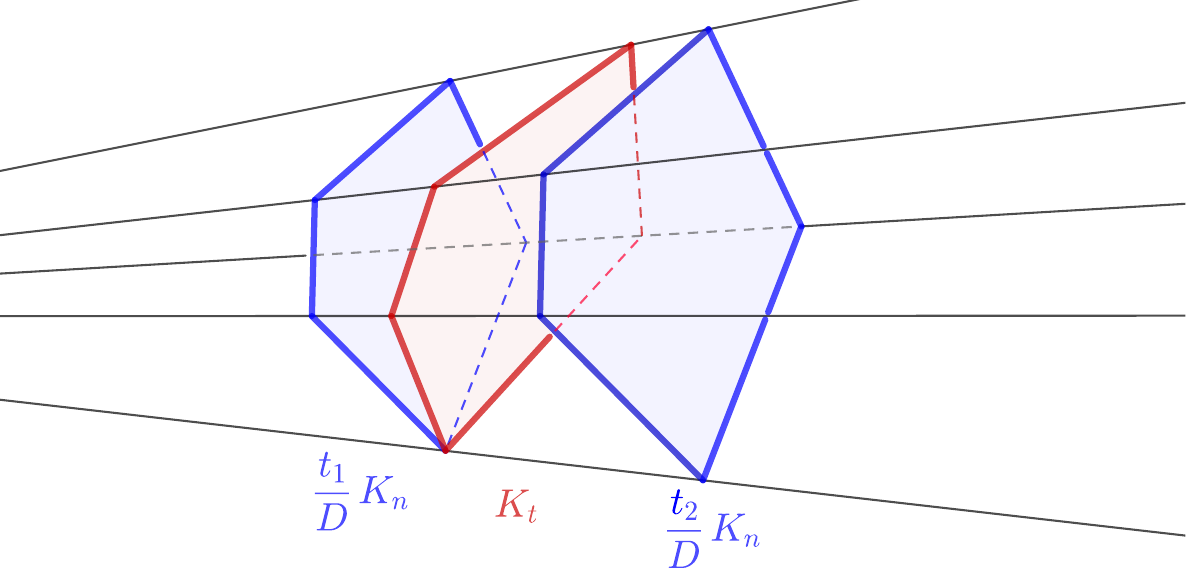}
\caption{The slice $K_t$, between two scaled copies of the base of the pyramid.}
\label{fig:slices}
\end{figure}

Now, it is always the case that $\tau \ge D$, but by choosing $D$ large enough, we can make $\tau^{n+1} \leq D^{n+1} (1+\ve)$. Using this fact with equation \eqref{eq:integ1}, we have
\begin{equation*}
\begin{split}
    \frac{1}{\vol_{n+1}(K_{n+1})}&\vol_{n+1}\brax{K_{n+1}\cap \refl_{\mathcal{L}}(K_{n+1})} 
    \\
    &\leq \frac{1}{\vol_{n+1}(K_{n+1})}\int_0^\tau \brax{\vol_n\brax{\frac{t}{D}K_n}(1+\ve)}\brax{(\sym_{n-1}(K_n) + \ve } dt \, ,
    \\
    &= 
    (1+\ve)\frac{\frac{\tau^{n+1}}{D^{n}(n+1)}\vol_n(K_n)(\sym_{n-1}(K_n)+\ve)}{\vol_{n+1}(K_{n+1})} \, ,
    \\
    &\leq 
    (1+\ve)\frac{D^{n+1}(1+\ve)}{D^{n+1}}(\sym_{n-1}(K_n)+\ve)\, ,
    \\
    &=
    (1+\ve)^{2}(\sym_{n-1}(K_n) +\ve)\, .
\end{split}
\end{equation*}

\vspace{5pt}
\noindent \textbf{Case 2.}
Take a hyperplane $H$ that contains $\mathcal{L}$, and whose normal vector makes an angle less than $\theta_D$ with $\ell_D$. Let $\tau$ now be the distance from $q$ to the base of the pyramid. 
Let $H_t$ be the hyperplanes parallel to $H$, parameterized so that $H_0$ passes through $p$, and $H_{D-\tau}$ passes through $q$, and let $K_t = H_t\cap K_{n+1}$. A reflection of any slice $K_t$ in $\mathcal{L}$ can be decomposed into a reflection in $H$ and a reflection within $H_t$.

Taking $D$ to be larger if necessary, we make $\vol_n(K_t) \le \vol_n\brax{\frac{t}{D} K_n}(1+\ve)$. Supposing that the reflections of each slice intersect perfectly, we have the following upper bound,
\begin{equation*}
\begin{split}
    \frac{ \vol_{n+1}\brax{K_{n+1}\cap \refl_{\mathcal{L}}(K_{n+1})}}{ \vol_{n+1}\brax{K_{n+1}}}
    &\leq \frac{2}{D}\int_{D-2\tau}^{D-\tau} \vol_n\brax{\frac{t}{D} K_n}(1+\ve) dt\, ,
    \\
    &=
    \frac{2}{D}\int_{D-2\tau}^{D-\tau} \frac{t^n}{D^n}(n+1))(1+\ve) dt\, ,
    \\
    &=
    \frac{2(1+\ve)}{D^{n+1}} \sqrax{(D-\tau)^{n+1}-(D-2\tau)^{n+1}} =: *\, ,
\end{split}
\end{equation*}
which attains its max at $\tau = D\brax{ \frac{1}{2 - 2^{1/n + 2}} + \frac{1}{2}} $. Plugging this in and simplifying, we get
\begin{equation*}
\begin{split}
    *&\leq 2(1+\ve) \sqrax{\brax{\frac{{2}^{1/n}}{2^{1/n + 1}-1}}^{n+1}-\brax{\frac{1}{2^{1/n + 1}-1}}^{n+1}}\, ,
    \\
    &=
    (1+\ve) \brax{\frac{1}{2-2^{-1/n}}}^{n}\, ,
\end{split}
\end{equation*}
which creates the second part of the bound in the theorem.

\vspace{5pt}
\noindent \textbf{Case 3.}
Here we can overestimate how much overlap there is by considering a cylinder of diameter $\circc(K_n)$ and height $D$. Because of the skew in angle, the reflection of the cylinder can overlap with the original cylinder in only at most $\circc(K_n) \cot(\theta_D) = \frac{D}{2(n+1)}$ of its height. Thus, the maximal volume of overlap is $\leq \vol_n(K_n) \cdot \frac{D}{2(n+1)} = \frac{D}{2}$, i.e. at most $\frac{1}{2}$ the area of the pyramid. This bound is subsumed by the one in Case 2 for all $n$.

\vspace{5pt}
Altogether, in all cases $\max\set{(1+\ve)\brax{2-2^{-1/n}}^{-n},(1+\ve)^2(\sigma(n,n-1) +2\ve)}$ is the most symmetry a subspace can yield. Since $\ve$ was arbitrary, we are done.
\end{proof}

\appendix

\section{Analysis of the program for Proposition~\ref{prop:ax-lower}}
\label{axialanalysis}

We will consider four cases according to inequalities between $a,f$ and $b,e$. By symmetry, we may assume $c \leq d$ without loss of generality.

\vspace{5pt}
\noindent \textbf{Case 1: $a \leq f, b \geq e$.} Sum $3$ times constraint \eqref{addax} and $3/2$ times \eqref{tribdf} and obtain
\begin{equation}
\label{case1axial}
    \frac{9}{2}(1+t) \lambda \geq 3+\frac{3}{2}(3\sqrt{2}-4) + 3t\, .
\end{equation}
The inequality above becomes weaker on $\lambda$ as $t$ grows. By constraints \eqref{ABDax} and \eqref{CEFax}, we see $t \leq 2/3$. Substituting $t = 2/3$ into \eqref{case1axial} gives the bound $\lambda > 0.715.$

\vspace{5pt}
\noindent \textbf{Case 2: $a \geq f, b \geq e$.} By constraint \eqref{addax},
\begin{equation*}
    \lambda \geq \frac{1+f+e+c}{1+t} \geq \frac{1}{1+a+b+d}\, .
\end{equation*}
By constraint \eqref{ABDax}, the above is at least $3/4$.

\vspace{5pt}
\noindent \textbf{Case 3: $a \geq f, b \leq e$.} The bound in Theorem~\ref{thm:axiality} is tight in this case and the next one. Due to the inequalities between $a, f$ and $b, e$, we only need to consider the following sub-program of the full axial symmetry \hyperref[axialprogram]{program}.

\begin{align*}
\textsc{Variables: } & \lambda,\,a,\,b,\,c,\,d,\,e,\,f,\,t \nonumber \\
\textsc{Minimize: } & \lambda  \nonumber \\
\textsc{Subject to: } & a,\,b,\,c,\,d,\,e,\,f \geq 0\,, \nonumber \\
& a+b+c+d+e+f \geq t\,, \nonumber \\
& d +e \leq 1/6\,, \nonumber \\
& a+ b+d \leq 1/3\,, \nonumber \\
& (1+t) \lambda \geq 1+f+b+c\,, \nonumber \\
& (1+t) \lambda \geq  3\sqrt{2} - 4 + 2(a+c+e)\,.
\end{align*}

\vspace{10pt}
Our strategy will be to treat $t$ as a constant, so that the above is a linear program. We will write down the corresponding dual program, and give a lower bound on the maximum of the dual for all values of $t$. The dual program is

\begin{align*}
\textsc{Variables: }& y_1, \,y_2,\,y_3,\,y_4,\,y_5 \nonumber \\
\textsc{Maximize: } & ty_1-y_2/6-y_3/3+y_4+(3\sqrt{2}-4)y_5 \nonumber \\
\textsc{Subject to: } & y_1,\,y_2,\,y_3,\,y_4,\,y_5 \geq 0\,, \nonumber \\
& (1+t)(y_4+y_5) \leq 1\,, \nonumber \\
& y_1-y_3-2y_5 \leq 0\,, \nonumber \\
& y_1-y_3-y_4-y_5 \leq 0\,, \nonumber \\
& y_1-y_4-2y_5 \leq 0\,, \nonumber \\
& y_1-y_2-y_3 \leq 0\,, \nonumber \\
& y_1-y_2-2y_5 \leq 0\,, \nonumber \\
& y_1-y_4 \leq 0\,. \nonumber
\end{align*}

\vspace{10pt}
The dual variables correspond to the equations of the primal in the order written down above. We observe two feasible solutions to the dual:
\begin{equation*}
   (y_1,\,y_2,\,y_3,\,y_4,\,y_5) = \frac{1}{1+t}(0,0,0,1,0), \quad \text{and} \quad (y_1,\,y_2,\,y_3,\,y_4,\,y_5) = \frac{1}{1+t}\bigg(\frac{4}{5},\,\frac{2}{5},\,\frac{2}{5},\,\frac{4}{5},\,\frac{1}{5}\bigg)\, . 
\end{equation*}
This shows that the objective of the modified primal program above satisfies
\begin{equation*}
   \lambda \geq \max\left\{\frac{1}{1+t}, \frac{1}{5(1+t)}\big(4t+3\sqrt{2}-1\big) \right\}\, . 
\end{equation*}
The minimum value is achieved at $t = (6-3\sqrt{2})/4$, giving the bound $\lambda \geq 4/(10-3\sqrt{2})$.

\vspace{5pt}
\noindent \textbf{Case 4: $a \leq f, b \leq e$.} Similar to the previous case, we only need to consider the following sub-program of the full axial symmetry \hyperref[axialprogram]{program}.

\begin{align*}
\textsc{Variables: } & \lambda,\,a,\,b,\,c,\,d,\,e,\,f,\,t \nonumber \\
\textsc{Minimize: } & \lambda \nonumber \\
\textsc{Subject to: } & a,\,b,\,c,\,d,\,e,\,f \geq 0\,, \nonumber \\
&  a+b+c+d+e+f\geq t\,, \nonumber \\
& d +e \leq 1/6\,, \\
& c+e +f \leq 1/3\,, \\
& (1+t) \lambda \geq 1+a+b+c\,, \\
& (1+t) \lambda \geq 3 \sqrt{2}-4 + 2(b+d+f)\,.
\end{align*}

\vspace{10pt}
By swapping variables $a$ with $f$, $b$ with $c$, and $d$ with $e$, we obtain exactly the primal sub-program written down in Case~3.

\vspace{5pt}
This concludes the proof of Proposition~\ref{prop:ax-lower}. 

\section{A detailed proof of Proposition~\ref{prop:r2part2}}
\label{app:axcomp}

In this appendix, we prove $\sigma(2,1)\leq \frac{1}{3}(\sqrt{2} + 1)$ by analyzing the axiality of the quadrilateral $\set{(0,0), (1,0), \left(1, \ve\frac{\sqrt{2}}{1 + \sqrt{2}}\right), \left(\frac{1}{\sqrt{2}},\ve\right)}$ shown in Figure~\ref{fig:2Dupperbound}. By sending $\ve\to 0$, the axiality approaches the desired number. 

Let $\alpha$ be the argument of a direction vector normal to the reflection line, which we denote $\ell$. The vertices of the reflected body are called $A'$, $B'$, $C'$, and $D'$. To summarize each case in the proof, we break up the choices for the angle and translate of $\ell$ into the following:
\begin{enumerate}[label=(\arabic*),leftmargin=4\parindent,itemsep=4pt]
    \item \textbf{Small angle case}: $0\leq \alpha \leq \frac{1}{2}\arctan\brax{\frac{\sqrt{2}\ve}{1+\sqrt{2}}}$. The translate is recorded using the $x$-intercept of $\ell$, and can take values such that
    \begin{enumerate}[itemsep=2pt]
        \item $\ell$ intersects $\overline{AC}$, or
        \item $\ell$ intersects $\overline{CD}$.
    \end{enumerate}
    \item \textbf{Middle angle case}: $\frac{\pi}{2} \leq \alpha  \leq \frac{1}{2}(\pi + \arctan(\sqrt{2}\ve))$. The translate is recorded using the $y$-intercept of $\ell$, and may take values that are
    \begin{enumerate}[itemsep=2pt]
        \item positive and large enough so that $C'$ is above $AB$,
        \item negative,
        \item positive such that $\ell$ intersects $\overline{CD}$ and $C'$ is below $AB$, or
        \item positive such that $\ell$ intersects $\overline{BD}$ and $C'$ is below $AB$.
    \end{enumerate}
    \item $-\frac{\pi}{4} \leq \alpha < \frac{\pi}{4}$, excluding the interval of the `Small angle case'. All translates are easily ruled out.
    \item  $\frac{1}{2}(\pi + \arctan(\sqrt{2}\ve)) \leq \alpha < \frac{3\pi}{4}$. We check translates such that
    \begin{enumerate}[itemsep=2pt]
        \item $C'$ lies above $AB$, or
        \item $C'$ lies below $AB$. 
    \end{enumerate}
    \item $\frac{\pi}{4} \leq \alpha < \frac{\pi}{2}$. All translates are easily ruled out.
\end{enumerate}

\subsection{Cases (1) and (3)}

We first observe that reflection lines passing through $\overline{CD}$ are quite poor, so we can rule out case (1.b). Indeed, as $\ve$ becomes small, such a line yields symmetry at most $\frac{1}{2} (3 - \sqrt{2})\approx 0.79$, since it loses the entire triangle cut off by the line $D'B'$. Further, an angle for which $B'$ is above $AB$, or $D'$ is below $AC$, is sharply worse than otherwise. $B'$ lying below $AB$ implies $\alpha \geq 0$, and $D'$ lying above $AC$ implies $\alpha \leq \frac{1}{2}\arctan\brax{\frac{\sqrt{2}\ve}{1+\sqrt{2}}}$, which is because the largest possible such angle comes from the perpendicular bisector of $A$ and $D$. We have therefore ruled out case (3), and it only remains to check in this section the case (1.a), which is shown in Figure \ref{fig:uppercase1a}.

With these considerations, let $R$ and $S$ be the intersections of $\overline{D'B'}$ with $\overline{AB}$ and $\overline{AC}$, respectively, and $P$ and $Q$ the intersections of $\ell$ with $\overline{AB}$ and $\overline{AC}$, respectively. Let $t$ be the distance from $A$ to $P$.

\begin{figure}[H]
    \centering
    \includegraphics[scale=0.53]{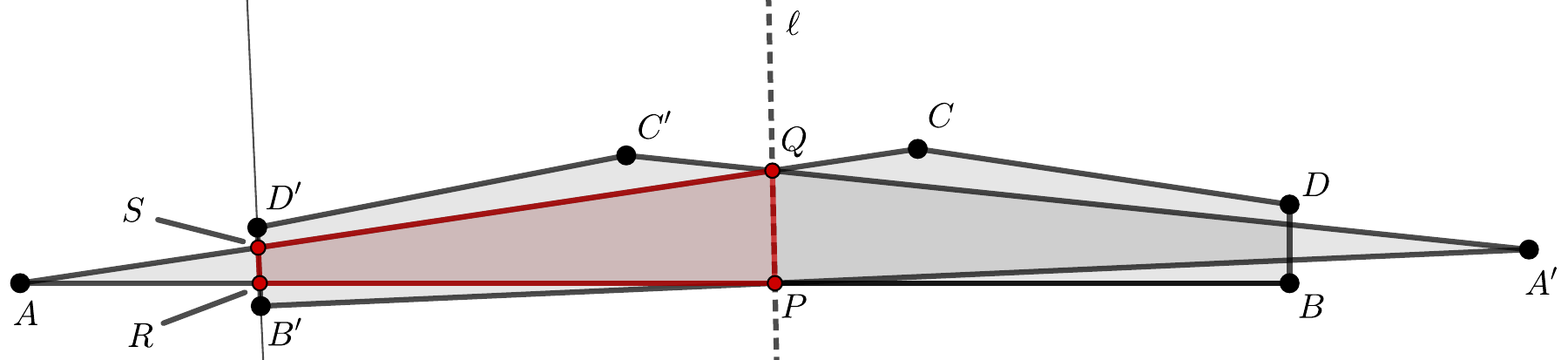}
    \caption{Case (1.a).}
    \label{fig:uppercase1a}
\end{figure}

We compute the locations of the four points, 
\begin{equation*}
\begin{split}
    P &= t(1,0)\, ,
    \\
    R &= \brax{t-\frac{1-t}{\cos(2\alpha)}}(1,0)\, ,
    \\
    S &= \frac{\cot(2\alpha)}{\sqrt{2}\ve+\cot(2\alpha)}\left(t-\frac{1-t}{\cos(2\alpha)}\right)(1,\sqrt{2}\ve)\, ,
    \\
    Q &= \frac{t\cot(\alpha)}{\sqrt{2}\ve+\cot(\alpha)} (1,\sqrt{2}\ve)\, ,
\end{split}
\end{equation*}
and then use the shoelace formula
to compute the area.
\begin{equation*}
\begin{split}
    2\cdot \text{Area} &= Q_xS_y - Q_yS_x + S_xR_y-S_yR_x + R_xP_y - R_yP_x + P_xQ_y - P_yQ_x\, , \\        
    &= -S_yR_x + P_xQ_y \, , \\
    &=\ve \sqrt{2}\sqrax{
    t^2\brax{\frac{\cot(\alpha)}{\sqrt{2}\ve+\cot(\alpha)}}
    -
    \brax{t-\frac{1-t}{\cos(2\alpha)}}^2\brax{\frac{\cot(2\alpha)}{\sqrt{2}\ve+\cot(2\alpha)}}
    }\, .
\end{split}
\end{equation*}
Expanding this and writing it as a quadratic in $t$ gives 
\begin{equation*}
    \begin{split}
    \frac{2\cdot \text{Area}}{\sqrt{2} \ve} = &\  
    t^2 \brax{\frac{\cot(\alpha)}{\cot(\alpha) + \sqrt{2}\ve}-\frac{\cot(2\alpha)+2\csc(2\alpha)+\csc(2\alpha)\sec(2\alpha)}{\cot(2\alpha) + \sqrt{2}\ve}} 
    \\ & 
    + t\brax{\frac{2\csc(2\alpha) + 2\csc(2\alpha)\sec(2\alpha)}{\cot(2\alpha) + \sqrt{2}\ve}} 
    - \frac{\csc(2\alpha)\sec(2\alpha)}{\cot(2\alpha)+ \sqrt{2}\ve}\, .
    \end{split}
\end{equation*}
We can find the maximum of a quadratic easily, and it simplifies significantly. The maximum area, altogether, is
\begin{equation*}
   \frac{\ve \sqrt{2} }{\sqrt{2}\ve\sin(\alpha) + 2\cos(\alpha)+1} \, .
\end{equation*}
So we now have the maximal ratio with the original body,
\begin{equation*}
    \brax{\frac{1}{2-\sqrt{2}}} \frac{\sqrt{2}}{\sqrt{2}\ve\sin(\alpha) + 2\cos(\alpha)+1} = \frac{1+\sqrt{2}}{\sqrt{2}\ve\sin(\alpha) + 2\cos(\alpha)+1}\, .
\end{equation*}
We need to find for what interval containing $\alpha$ is this ratio at most $\frac{1}{3}(1+\sqrt{2})$. This is equivalent to finding where $\sqrt{2}\ve\sin(\alpha) + 2\cos(\alpha) \geq 2$, which is the case when $0 \leq \alpha < 2 \arctan(\frac{\ve}{\sqrt{2}})$. This in particular is valid for $\alpha \leq \frac{1}{2}\arctan\brax{\frac{\sqrt{2}\ve}{1+\sqrt{2}}}$, so case (1.a) is complete. As an aside, one can easily check that for $\alpha = 0$, the optimal translate of the line occurs when $t=\frac{2}{3}$ and attains the value $\frac{1}{3}(1+\sqrt{2})$ precisely.

\subsection{Cases (2), (4), and (5)}

Here, the translate $t$ is the distance of $\ell$ along the $y$-axis. For convenience, we let $\beta = \alpha - \frac{\pi}{2}$. Denote the internal angle bisector of $AB$ and $AC$ by $\ell_0$, and the angle it makes with $AB$ by $\beta_0 = \frac{1}{2} \arctan(\ve \sqrt{2})$. First, suppose $\beta > \beta_0$, so that we are in case (4). Let $S$ and $S_0$ be the intersections of $\ell$ and $\ell_0$ with $\overline{CD}$ respectively (it will become clear later that $\ell$ must intersect $\overline{CD}$ for any good translate, so $S$ is well defined), and let $C'$ and $C_0'$ be the reflections of $C$ in $\ell$ and $\ell_0$ respectively. We denote the quadrilateral $\brax{A,C_0', S_0, C}$ by $\mathcal{A}_0$, which comes from reflection in $\ell_0$.
\begin{figure}[H]
    \centering
    \includegraphics[scale=0.53]{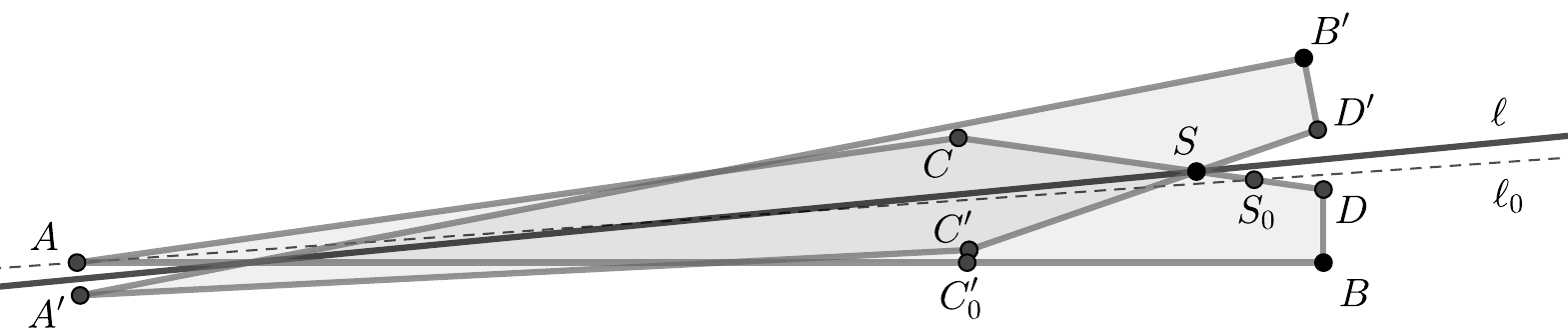}
    \caption{Case (4.a).}
    \label{fig:uppercase4a}
\end{figure}
We first discuss case (4.a), which are reflections like the one in Figure \ref{fig:uppercase4a}. If $C'$ is above $C_0'S_0$ then $\ell$ is trivially worse than $\ell_0$, since the area of overlap obtained by reflection in $\ell$ is strictly contained in $\mathcal{A}_0$. If $C'$ is below $C_0'S_0$ but still above $AB$, we have two triangles $\triangle C_0'C'M$ and $\triangle SS_0M$, where $M=C'S\cap C_0'S_0$. Observe that $\triangle C_0'C'M$ represents an area gained over $\mathcal{A}_0$, and $\triangle SS_0M$ an area lost. We show that $\abs{\triangle C_0'C'M} < \abs{\triangle SS_0M}$, by showing that $M$ approaches $C_0'$ as $\ve$ goes to $0$.

The lines $C'S$ and $C_0'S_0$ are given by the equations 
\begin{equation*}
    y = \frac{2\ve}{1-2\ve^2}(\sqrt{2}x-\sqrt{1+2\ve^2})\text{  and  }
    y = \frac{\sqrt{2} \ve \cot(2 \beta) + 1}{\cot(2 \beta) - \sqrt{2} \ve}x- \frac{(2\ve-t)\csc(2\beta)}{\cot(2 \beta) - \sqrt{2} \ve } + t\, ,
\end{equation*}
respectively. The worst possible translate (giving the largest $\triangle C_0'C'M$ and smallest $\triangle SS_0M$) is when $C'$ lies on $\overline{AB}$. For fixed $\beta$, the appropriate translation required for $C'$ to lie on the $\overline{AB}$ is given by $t=\ve(1+  \frac{1}{2}  \sec^2(\beta)) - \frac{\tan(\beta)}{\sqrt{2}}$. Substituting in $t$ into equations $C_0'S_0$ and $C'S$ then solving simultaneously for $M_x$, we get
\begin{equation*}
     M_x = \frac{(\cot(2 \beta)-\sqrt{2} \ve) (2 \ve \sqrt{1 + 2 \ve^2} + (\ve(1+  \frac{\sec^2(\beta)}{2}  ) - \frac{\tan(\beta)}{\sqrt{2}})(1- 2 \ve^2) ) - \frac{(1-2 \ve^2) ( \frac{\tan(\beta)}{\sqrt{2}}-\frac{\ve\sec^2(\beta)}{2}  )}{\sin(2 \beta)}  }{(1 + 2 \ve^2) ( \sqrt{2} \ve \cot(2 \beta)-1)}\, .
\end{equation*}
As $\ve$ goes to $0$, one can compute that $M_x$ approaches $\frac{1}{\sqrt{2}}$.

\begin{figure}[H]
    \centering
    \includegraphics[scale=0.54]{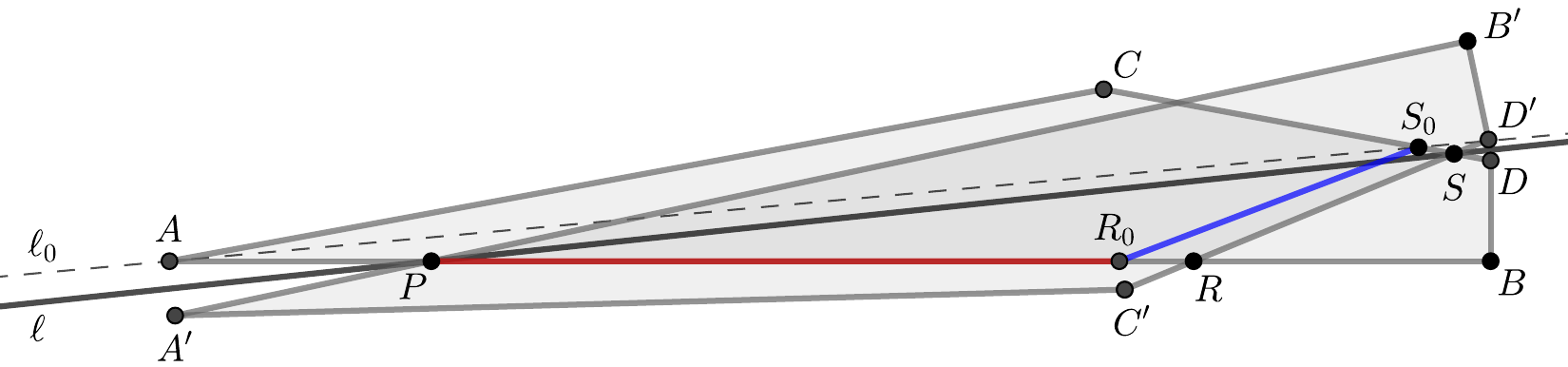}
    \caption{Case (4.b).}
    \label{fig:uppercase4b}
\end{figure}

Next we address case (4.b), shown in Figure \ref{fig:uppercase4b}. We can see that $\overline{PR_0}$ is longer than $\overline{R_0S_0}$, otherwise $\triangle APC$ is easily shown to be too large an area excluded from the overlap. As we translate downwards in this case, we lose a slice of width roughly $|\overline{PR}|\geq |\overline{PR_0}|$, and gain a slice of size roughly $|\overline{RS}|\leq |\overline{R_0S_0}|$. Therefore, one can use an integral argument to see that translation in this direction is a loss, and this case is strictly worse than case (4.a). This concludes all of case (4). 

We move on now to case (2). The hardest out of all the cases to analyze is (2.c), which is depicted in Figure~\ref{fig:uppercase2p2}. 

\begin{figure}[H]
    \centering
    \includegraphics[scale=0.54]{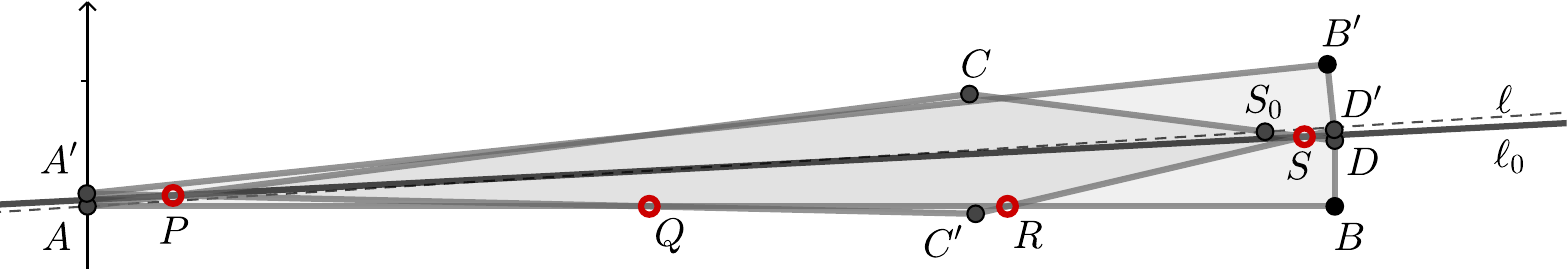}
    \caption{Case (2.c).}
    \label{fig:uppercase2p2}
\end{figure}

We rule out case (2.a), where $C'$ is below $AB$, since translating $\ell$ downwards strictly increases the area. We observe further that by the same argument that disqualifies case (4.b) above, the translate must be positive, so $A'$ sits above $A$, thereby settling case (2.b). We now make the assumption that $\overline{CD}$ and $\overline{C'D'}$ intersect at the point we will call $S$, so we are in case (2.c). Observe that $S$ is still to the right of $S_0$, otherwise the overlap region is entirely contained $\mathcal{A}_0$. We compute the locations of the four vertices in the figure, which are
\begin{equation*}
    \begin{split}
    P &= \frac{t}{\sqrt{2}\ve-\tan(\beta)}(1,\sqrt{2}\ve)\, , \\
    Q&=\brax{P_x\frac{2 \ve^2 + 2 \sqrt{2} \ve \cot(2 \beta) - 1}{(\sqrt{2} \ve \cot(2 \beta) - 1)},\,0}\, ,\\
    R&=\brax{\frac{(2 \ve - t )\sec(2\beta)+ \sqrt{2} \ve t \tan(2 \beta)- t }{\sqrt{2} \ve  + \tan(2 \beta)},\,0}\, ,\\
    S&=\brax{\frac{2\ve-t}{\tan(\beta)+\sqrt{2}\ve},\,2\ve-\sqrt{2}\ve\frac{2\ve-t}{\tan(\beta)+\sqrt{2}\ve}}\, .
    \end{split}
\end{equation*}

This time, there are fewer cancellations in the shoelace formula. We have
\begin{equation*}
\begin{split}
    2\cdot \text{Area} &= P_xQ_y + Q_xR_y + R_xS_y + S_xP_y -Q_xP_y - R_xQ_y - S_xR_y - P_xS_y\, , \\
    &= P_x\cdot 0 + Q_x\cdot 0 + R_x\cdot(-\sqrt{2}\ve S_x+2\ve) + S_x\cdot\sqrt{2}\ve P_x \\
    &\hspace{2cm}-Q_x\cdot\sqrt{2}\ve P_x - R_x\cdot 0 - S_x\cdot 0 - P_x\cdot(-\sqrt{2}\ve S_x+2\ve )\, , \\
    &=\sqrt{2}\ve [\sqrt{2}P_x(\sqrt{2}S_x-1)+R_x(\sqrt{2}-S_x)-P_xQ_x]\,.
\end{split}
\end{equation*}
Plugging those quantities in results in a monstrous expression, which can be expanded and written as a quadratic $at^2+bt+c$ in the same manor as in case (2.c). For simplicity, let $k=\sqrt{2}\ve$.  We computed the coefficients, which are
\begin{equation*}
    \begin{split}
    a =&\, (\tan(\beta ) + k) (\tan(2\beta ) + k) (2 k + (k^2-1) \tan(2\beta ) )
    \\
    & +2 (\tan(\beta ) - k) (\tan(2\beta ) - k) (\tan(2\beta ) + k)
    \\
    & + (\tan(\beta ) - k)^2 (\tan(2\beta ) - k) (k \tan(2\beta ) - \sec(2\beta ) - 1)\, ,
    \\
    b =&\, \sqrt{2}(\tan(\beta)-k)^2(\tan(2\beta)-k)\big((\tan(2\beta)+k)
    \\
    &+ (\tan(\beta)(k \tan(2\beta)-\sec(2\beta)-1)+k\sec(2\beta) )\big)\, ,
    \\
    c =&\, 2k(\tan(\beta)-k)^2(\tan(2\beta)-k)\tan(\beta)\sec(2\beta)\, .
    \end{split}
\end{equation*}
Maximizing over $t$ gives the value 
\begin{equation}
    \label{eq:gequation1}
    m(\beta) = 2(1 + \sqrt{2})k\frac{\tan(\beta)\cdot g(\beta)-k\sec(2\beta)(\tan(\beta) - k)^2(\tan(2\beta) - k)}{\cos(2\beta)(\tan(\beta)+k)(\tan(2\beta)+k)g(\beta)}\, ,
\end{equation}
where 
\begin{equation*}
    \begin{split}
    g(\beta) = &\, (\tan(\beta)+k)(\tan(2\beta)+k)\brax{2k+(2k-1)\tan(2\beta)}
    \\
    &+ 2\brax{\tan(\beta)-k}\brax{\tan(2\beta)-k}\brax{\tan(2\beta)+k}
    \\
    &+ \brax{\tan(\beta)-k}^{2}\brax{\tan(2\beta)-k}\brax{k\tan(2\beta)-\sec(2\beta)-1}\, .
    \end{split}
\end{equation*}
This can be moderately simplified, to
\begin{equation*}
\begin{split}
    m(\beta)&=
    -4(1 + \sqrt{2})k \frac{(k + 2) \cos(\beta) + (k - 2) \cos(3 \beta) - 2 \sin(\beta)}
    { h(\beta) }\, ,
\end{split}
\end{equation*}
where 
\begin{equation*}
    \begin{split}
        h(\beta) = &\, - (14 k^2 + 4 k) \cos(\beta) -8 k^2 \cos(3 \beta) +  (4 k - 2 k^2) \cos(5 \beta) + (2 k - 4) \sin(\beta) \\
        &\, + (k^3 - 2 k^2 + 9 k - 2) \sin(3 \beta) + (k^3 - 2 k^2 - k + 2) \sin(5 \beta)\, .
    \end{split}
\end{equation*}

When $k$ is small, the important pieces become the constant terms, which only occur on the bottom: $-4\sin(\beta) - 2\sin(3\beta) + 2\sin(5\beta)$. However, this collection of terms vanishes when $\beta$ is small. In particular, let $\beta\leq \arctan(k)$ (this value is chosen to avoid singularities which can be found using the first expression for $m$, equation \eqref{eq:gequation1}, otherwise any constant times $\arctan(k)$ would have worked as well). Thus we are interested only in the $k$ terms, which are
\begin{equation*}
    \frac{1}{4(1 + \sqrt{2})} m(\beta) \approx \frac{-2\cos(\beta)+2\cos(3\beta)+2\sin(\beta)}{-4\cos(\beta)+4\cos(5\beta) + 2\sin(\beta)+ 9\sin(3\beta)-\sin(5\beta)}\, .
\end{equation*}
Finally, near 0, the cosine terms {all} cancel, and we are left with the contribution of the sine terms which is $\frac{2}{2+9\cdot 3 - 5} = \frac{1}{12}$, so that $m(\beta)\to \frac{1}{3}(1 + \sqrt{2})$.

\vspace{3mm}
Case (2.d) is when $D'$ is reflected below $D$ so that $\overline{CD}$ and $\overline{C'D'}$ do not intersect. This is in fact expected to be the optimal translate for some angles between $0$ and $\arctan\brax{\frac{h\sqrt{2}}{1+\sqrt{2}}}$, however, we can bound it using the previous computation.
\begin{figure}[H]
    \centering
    \includegraphics[scale=0.54]{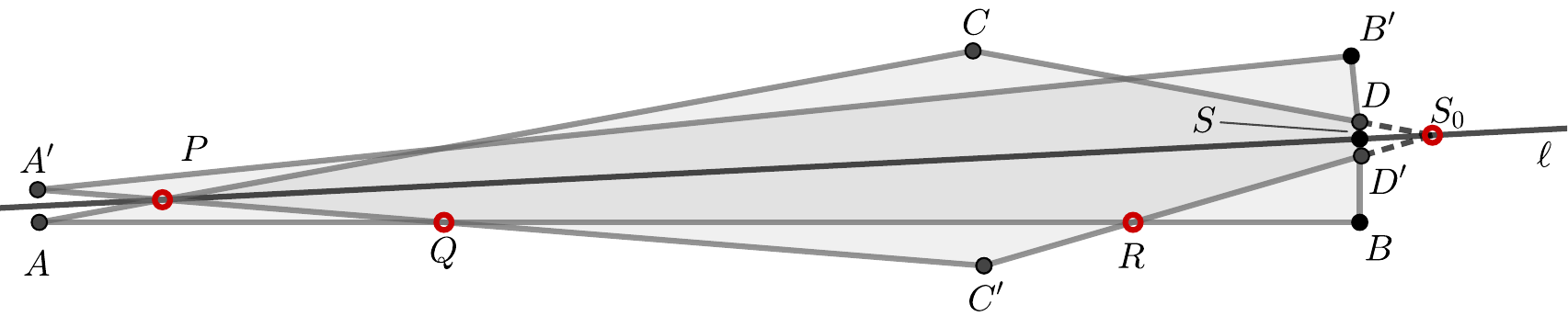}
    \caption{Case (2.d).}
    \label{fig:case2d}
\end{figure}
Our computation in case (2.c) still finds the area of the quadrilateral $PQRS_0$ shown in Figure \ref{fig:case2d}. Since this contains the polygon created from (2.d), we can upper bound it using this area. The ratio from this area approached the correct value for all angles in case (2), so we are done.

Case (5) is easily ruled out as well, noting that the shape is an isosceles triangle with a triangle cut off by the line $BD$. The reflection lines in this case are repetitions of the ones in case (2) and (4), only now, the larger end of the shape is reflected further apart.


\begin{thebibliography}{11}

\bibitem{Ball} K. Ball, Volume ratios and a reverse isoperimetric inequality, \emph{J. London Math. Soc.} \textbf{44}(2) (1991), 351--359. MR1136445

\bibitem{Bes} A. S. Besicovitch, Measure of asymmetry of convex curves, \emph{J. London. Math. Soc.} \textbf{23} (1948), 237--240. MR0027543

\bibitem{BR} A. Bielecki and K. Radziszewski, Sur les parall\'el\'epip\`edes inscrits dans les corps convexes, \emph{Ann. Univ. Mariae Curie-Sk\l{}odowska Sect. A} \textbf{7} (1954), 97--100. MR0081498

\bibitem{BM} A. B. Buda and K. Mislow, On a measure of axiality for triangular domains, \emph{Elem. Math.} \textbf{46}(3) (1991), 65--73. MR1113766

\bibitem{CS} G. D. Chakerian and S. K. Stein, On measures of symmetry of convex bodies, \emph{Canadian J. Math.} \textbf{17} (1965), 497--504. MR0177349

\bibitem{Cho} C.-Y. Choi, Finding the largest inscribed axially symmetric polygon for a convex polygon, Masters thesis, Department of Electrical Engineering and Computer Science, Korea Advanced Institute of Science and Technology, 2006.

\bibitem{deV} B. A. deValcourt, Measures of axial symmetry for ovals, \emph{Israel Journal of Mathematics} \textbf{4}(2) (1966), 65--82. MR0203589

\bibitem{Far} I. F\'ary, Sur la densit\'e des r\'eseaux de domaines convexes, \emph{Bull. Soc. Math. France} \textbf{78} (1950), 152--161. MR0039288

\bibitem{FR} I. F\'ary and L. R\'edei, Der zentralsymmetrische Kern und die zentralsymmetrische H\"ulle von konvexen K\"orpern, \emph{Math. Ann.} \textbf{122} (1950), 205--220. MR0039294

\bibitem{Gil} G. Gilat, On the quantification of asymmetry in nature, \emph{Foundations of Physics Letters} \textbf{3} (1990), 189--196.

\bibitem{GG} G. Gilat and Y. Gordon, Geometric properties of chiral bodies, \emph{J. Math. Chem.} \textbf{16} (1994), 37--48. MR1304180

\bibitem{Gru} B. Gr\"unbaum, Measures of symmetry for convex sets, Proceedings of Symposia in Pure Mathematics, Vol. VII, Convexity, Amer. Math. Soc., Providence, R. I., 1963. MR0156259

\bibitem{Gurobi} Gurobi Optimization, LLC, Gurobi Optimizer Reference Manual, 2023. \url{https://www.gurobi.com}

\bibitem{Kra} F. Krakowski, Bemerkung zu einer Arbeit von W. Nohl, \emph{Elemente der Mathematik} \textbf{18} (1963), 60--61.

\bibitem{Las2} M. Lassak, Approximation of convex bodies by rectangles, \emph{Geom Dedicata} \textbf{47} (1993), 111--117. MR1230108

\bibitem{Las} M. Lassak, Approximation of convex bodies by axially symmetric bodies, \emph{Proc. Amer. Math. Soc.} \textbf{130}(10) (2002), 3075--3084. MR1908932

\bibitem{LL} M. A. Lavrent'ev and L. A. Lyusternik, Elements of the calculus of varlations, Vol. 1, Part II, Moscow, 1935.

\bibitem{program} K. Moore, Convex Symmetry, GitHub Repository, 2023. \url{https://github.com/Kenneth-Moore/Axiality}

\bibitem{Noh} W. Nohl, Die innere axiale Symmetrie zentrischer Eibereiche der euklidischen Ebene, \emph{Elemente der Mathematik} \textbf{17} (1962), 59--63.

\bibitem{MN} M. Nowicka, On the measure of axial symmetry with respect to folding for parallelograms, \emph{Beitr Algebra Geom} \textbf{53} (2012), 97--103. MR2890367

\bibitem{Rad} K. Radziszewski, Sur une probl\`eme extr\'emal relatif aux figures inscrites et circonscrites aux figures convexes, \emph{Ann. Univ. Mariae Curie-Sklodowska, Sect. A} \textbf{6} (1952), 5--18. MR0062461

\bibitem{Ste} S. Stein, The symmetry function in a convex body, \emph{Pacific. J. Math.} \textbf{6} (1956), 145--148. MR0080321

\bibitem{GT} G. Toth, Measures of Symmetry for Convex Sets and Stability, Universitext, Springer, Cham, 2015. MR3410930

\end{thebibliography}
\end{document}